\newcommand{\zerodisplayskips}{%
	\setlength{\abovedisplayskip}{1pt}%
	\setlength{\belowdisplayskip}{1pt}%
	\setlength{\abovedisplayshortskip}{0pt}%
	\setlength{\belowdisplayshortskip}{0pt}}
\appto{\small}{\zerodisplayskips}
\newcommand{\wopt}{\textsc{Tucker-wOpt}\xspace}
\newcommand{\SHOT}{\textsc{$\text{S-HOT}_{\text{scan}}$}\xspace}
\newcommand{\CSF}{\textsc{Tucker-CSF}\xspace}
\newcommand{\method}{\textsc{P-Tucker}\xspace}
\newcommand{\MOP}{\textsc{P-Tucker}\xspace}
\newcommand{\TOP}{\textsc{P-Tucker-Cache}\xspace}
\newcommand{\APP}{\textsc{P-Tucker-Approx}\xspace}
\newcommand{\omp}{\textsc{OpenMP}\xspace}
\newcommand{\arma}{\textsc{Armadillo}\xspace}
\newtheorem{definition}{Definition}
\newtheorem{proof}{Proof}
\newtheorem{theorem}{Theorem}
\newcommand{\T}[1]{\boldsymbol{\mathscr{#1}}}   
\newcommand{\tensor}[1]{\boldsymbol{\mathscr{#1}}}   
\newcommand{\mat}[1]{\mathbf{#1}}
\newcommand{\vect}[1]{\mathbf{#1}}
\newcommand{\argmin}[1]{\underset{#1}{\operatorname{arg}\,\operatorname{min}}\;}
\begin{document}
	\title{Scalable Tucker Factorization for \\ Sparse Tensors - Algorithms and Discoveries}

\author{%
	{Sejoon Oh{\small $^{\ast}$}, Namyong Park{\small $^{\dagger}$}, Lee Sael{\small $^{\ast}$}, U Kang{\small $^{\ast}$} }%
	\vspace{1.6mm}\\
	\fontsize{10}{10}\selectfont\itshape
	$^{\ast}$ Seoul National University, Korea\\
	$^{\dagger}$ Carnegie Mellon University, USA\\
	\fontsize{9}{9}\selectfont\ttfamily\upshape
	%
	$^{\ast}$\,ohhenrie@snu.ac.kr
	$^{\dagger}$\,namyongp@cs.cmu.edu
	$^{\ast}$\,saellee@gmail.com
	$^{\ast}$\,ukang@snu.ac.kr%
}
%
%
%
%
%
%
	
	\maketitle
	
	\begin{abstract}
		\label{sec:abstract}
		\noindent
Given sparse multi-dimensional data (e.g., (user, movie, time; rating) for movie recommendations), how can we discover latent concepts/relations and predict missing values?
Tucker factorization has been widely used to solve such problems with multi-dimensional data, which are modeled as tensors.
However, most Tucker factorization algorithms regard and estimate missing entries as zeros, which triggers a highly inaccurate decomposition. Moreover, few methods focusing on an accuracy exhibit limited scalability since they require huge memory and heavy computational costs while updating factor matrices.

In this paper, we propose \method, a scalable Tucker factorization method for sparse tensors.
\method performs alternating least squares with a row-wise update rule in a fully parallel way, which significantly reduces memory requirements for updating factor matrices.
Furthermore, we offer two variants of \method: a caching algorithm \TOP and an approximation algorithm \APP, both of which accelerate the update process.
Experimental results show that \method exhibits 1.7-14.1$\times$ speed-up and 1.4-4.8$\times$ less error compared to the state-of-the-art. In addition, \method scales near linearly with the number of observable entries in a tensor and number of threads. 
Thanks to \method, we successfully discover hidden concepts and relations in a large-scale real-world tensor, while existing methods cannot reveal latent features due to their limited scalability or low accuracy. 
	\end{abstract}

%
%

	\section{Introduction}
	\label{sec:intro}


Given a large-scale sparse tensor, how can we discover latent concepts/relations and predict missing entries?
How can we design a time and memory efficient algorithm for analyzing a given tensor?
Various real-world data can be modeled as tensors or multi-dimensional arrays (e.g., (user, movie, time; rating) for movie recommendations).
Many real-world tensors are sparse and partially observable, i.e.,
composed of a vast number of missing entries and a relatively small number of observable entries.
Examples of such data include item ratings~\cite{Yahoo}, social network~\cite{DBLP:journals/tkdd/DunlavyKA11}, and web search logs~\cite{Websearch} where most entries are missing.
Tensor factorization has been used effectively for analyzing tensors~\cite{human, Anomaly, zheng2010flickr,journals/debu/PapalexakisKFSH13,Sael201582,conf/cikm/ParkJLK16,conf/icde/JeonJSK16}.
Among tensor factorization methods~\cite{kolda2009tensor}, Tucker factorization has received much interest since it is a generalized form of other factorization methods like CANDECOMP/PARAFAC (CP) decomposition, and it allows us to examine not only latent factors but also  relations hidden in tensors.

While many algorithms have been developed for Tucker factorization~\cite{tucker1966some, DBLP:journals/siammax/LathauwerMV00a, MET, DBLP:journals/vldb/JeonPFSK16},
most methods produce highly inaccurate factorizations since they assume and predict missing entries as zeros, and the values of whose missing entries are unknown.
Moreover, existing methods focusing only on observed entries exhibit limited scalability since they exploit tensor operations and singular value decomposition (SVD), leading to heavy memory and computational requirements.
In particular, tensor operations generate huge intermediate data for large-scale tensors,
which is a problem called \textit{intermediate data explosion}~\cite{kang2012gigatensor}.
A few Tucker algorithms~\cite{Oh:2017:SHOT,filipovic2015tucker, kaya, smith2017tucker} have been developed to address the above problems,
but they fail to solve the scalability and accuracy issues at the same time.
In summary, the major challenges for decomposing sparse tensors are
1) how to handle missing entries for an accurate and scalable factorization, and
2) how to avoid intermediate data explosion and high computational costs caused by tensor operations and SVD.

\begin{table}[t!]
	\small
	\caption{
		Scalability summary of our proposed method \method and competitors. A check-mark of a method indicates that the algorithm is scalable with a particular aspect.
			\method is the only method scalable with all aspects of tensor scale, factorization speed, memory requirement, and accuracy of decomposition; on the other hand, competitors have limited scalability for some aspects.
	}
	\centering
		\begin{tabular}{c | c c c c c }
			\toprule
			\textbf{Method} & \textbf{Scale} & \textbf{Speed} & \textbf{Memory} & \textbf{Accuracy} \\
			\midrule
			\wopt~\cite{filipovic2015tucker} &  &  &  & \checkmark \\
			\CSF~\cite{smith2017tucker}	& \checkmark & \checkmark &  &  \\
			\SHOT~\cite{Oh:2017:SHOT}	& \checkmark & \checkmark & \checkmark & \\
			\midrule
			\textbf{\method}		& \checkmark & \checkmark & \checkmark & \checkmark \\
			\bottomrule
		\end{tabular}
	\label{tab:comparators}
\end{table}

In this paper, we propose \method, a scalable Tucker factorization method for sparse tensors.
\method performs alternating least squares (ALS) with a row-wise update rule, which focuses only on observed entries of a tensor. The row-wise updates considerably reduce the amount of memory required for updating factor matrices, enabling \method to avoid the \textit{intermediate data explosion} problem.
Besides, to speed up the update procedure, we provide its time-optimized versions: a caching method \TOP and an approximation method \APP.
\method fully employs multi-core parallelism by carefully allocating rows of a factor matrix to each thread considering independence and fairness.
Table~\ref{tab:comparators} summarizes a comparison of \method and competitors with regard to various aspects.

Our main contributions are the following:
\begin{itemize}
\item \textbf{Algorithm.}
We propose \method, a scalable Tucker factorization method for sparse tensors. The key ideas of \method include 1) row-wise updates of factor matrices, 2) careful parallelization, and 3) time-optimized variants: \TOP and \APP.

\item \textbf{Theory.}
	We theoretically derive a row-wise update rule of factor matrices, and prove the correctness and convergence of it.
Moreover, we analyze the time and memory complexities of \method and other methods, as summarized in Table~\ref{table::analysis}.
\item \textbf{Performance.} \method provides the best performance across all aspects: tensor scale, factorization speed,  memory requirement, and accuracy of decomposition. Experimental results demonstrate that \method achieves \textbf{1.7-14.1}$\times$ speed-up with \textbf{1.4-4.8$\times$} less error for large-scale tensors, as summarized in Figures~\ref{fig:scalability}, \ref{fig:real_world_time}, and \ref{fig:real_world_accuracy}.
\end{itemize}


The code of \method and datasets used in this paper are available at \textbf{\url{https://datalab.snu.ac.kr/ptucker/}} for reproducibility.
The rest of this paper is organized as follows.
Section~\ref{sec:preliminary} explains preliminaries on a tensor, its operations, and its factorization methods.
Section~\ref{sec:proposed_method} describes our proposed method \method.
Section~\ref{sec:experiment} presents experimental results of \method and other methods.
Section~\ref{sec:discovery} describes our discovery results on the MovieLens dataset.
After introducing related works in Section~\ref{sec:related_work},
we conclude in Section~\ref{sec:conclusion}.

	\section{Preliminaries}
	\label{sec:preliminary}

We describe the preliminaries of a tensor in Section~\ref{sec:prelim:tensor}, its operations in Section~\ref{sec:prelim:operation}, and its factorization methods in Section~\ref{sec:prelim:tf}. Notations are summarized in Table~\ref{tab:Symbols}.

\vspace{-2mm}
\subsection{Tensor}
\label{sec:prelim:tensor}

Tensors, or multi-dimensional arrays, are a generalization of vectors
($1$-order tensors) and matrices ($2$-order tensors) to higher orders.
As a matrix has rows and columns, an $N$-order tensor has $N$ modes;
their lengths (also called dimensionalities) are denoted by $I_{1}$ through $I_{N}$, respectively.
We denote tensors by boldface Euler script letters (e.g., $\tensor{X}$),
matrices by boldface capitals (e.g., $\mat{A}$), and vectors by boldface lowercases (e.g., $\vect{a}$).
An entry of a tensor is denoted by the symbolic name of the tensor with its indices in subscript.
For example, $a_{i_{1}j_{1}}$ indicates the $(i_{1},j_{1})$th entry of $\mat{A}$, and
$\T{X}_{(i_{1},...,i_{N})}$ denotes the $(i_{1},...,i_{N})$th entry of $\tensor{X}$. The $i_{1}$th row of $\mat{A}$ is denoted by $\vect{a}_{i_{1}:}$, and the $i_{2}$th column of $\mat{A}$ is denoted by $\vect{a}_{:i_{2}}$.

\begin{table}[t!]
	\small
	\centering
	\vspace{-1mm}
	\caption{Table of symbols.}
	\begin{tabular}{c | l}
		\toprule
		\textbf{Symbol} & \textbf{Definition} \\
		\midrule
		$\tensor{X}$ & input tensor $(\in \mathbb{R}^{I_{1} \times ... \times I_{N}})$\\
		$\tensor{G}$ & core tensor $(\in \mathbb{R}^{J_{1}\times ... \times J_{N}})$\\
		$N$ & order of $\tensor{X}$\\
		$I_{n},J_n$ & dimensionality of the $n$th mode of $\tensor{X}$ and $\T{G}$\\
		$\mat{A}^{(n)}$ & $n$th factor matrix $(\in \mathbb{R}^{I_{n} \times J_{n}})$ \\
		$a^{(n)}_{i_{n}j_{n}}$ & $(i_{n},j_{n})$th entry of $\mat{A}^{(n)}$\\
		$\Omega$ & set of observable entries of $\tensor{X}$\\
		$\Omega_{i_n}^{(n)} $ & set of observable entries whose $n$th mode's index is $i_n$\\
		$|\Omega|,|\tensor{G}|$ & number of observable entries of $\tensor{X}$ and $\tensor{G}$\\
		$\lambda$ & regularization parameter for factor matrices\\
		$\|\tensor{X}\|$ & Frobenius norm of tensor $\T{X}$\\
		$T$ & number of threads\\
		$\alpha$ & an entry $(i_1,...,i_N)$ of input tensor $\T{X}$  \\
		$\beta$ & an entry $(j_1,...,j_N)$ of core tensor  $\T{G}$\\
		$Pres$& cache table $(\in \mathbb{R}^{|\Omega| \times |\T{G}|})$ \\
		$p$ & truncation rate \\
		\bottomrule
	\end{tabular}
	\label{tab:Symbols}
	\vspace{-1mm}
\end{table}

\subsection{Tensor Operations}
\label{sec:prelim:operation}
We review some tensor operations used for Tucker factorization. More tensor operations are summarized in~\cite{kolda2009tensor}.

\begin{definition}[Frobenius Norm]
	Given an N-order tensor $\tensor{X}$ $(\in \mathbb{R}^{I_{1} \times ... \times I_{N}})$, the Frobenius norm $||\tensor{X}||$ of $\tensor{X}$ is given by $	||\tensor{X}|| = \sqrt{\sum_{\forall{(i_1,...,i_N)}\in\tensor{X}}{\tensor{X}^{2}_{(i_1,...,i_N)}}}
$.
\\
\end{definition}

\begin{definition}[Matricization/Unfolding]
	Matricization transforms a tensor into a matrix. The mode-$n$ matricization of a tensor $\T{X} \in \mathbb{R}^{I_1 \times I_2 \times \cdots \times I_N}$ is denoted as $\mathbf{X}_{(n)}$. The mapping from an element $(i_1,...,i_N)$ of $\tensor{X}$ to an element $(i_n,j)$ of $\mathbf{X}_{(n)}$ is given as follows:
	\begin{equation}
	j=1+\sum_{k=1, k \neq n}^{N} \bigg[(i_k-1) \prod_{ m=1 , m \neq n}^{k-1} I_m\bigg].
	\end{equation}
	Note that all indices of a tensor and a matrix begin from 1.
\end{definition}

\begin{definition}[n-Mode Product]
	n-mode product enables multiplications between a tensor and a matrix. The n-mode product of a tensor $\T{X} \in \mathbb{R}^{I_1 \times I_2 \times \cdots \times I_N}$ with a matrix $\mathbf{U} \in \mathbb{R}^{J_n \times I_n}$ is denoted by $\tensor{X}\times_{n}\mathbf{U}$ ($\in \mathbb{R}^{I_1 \times \cdots \times I_{n-1} \times J_n \times I_{n+1} \times \cdots \times I_N}$). Element-wise, we have
	\begin{equation}
	{(\tensor{X}\times_{n}\mathbf{U})}_{i_1 \cdots i_{n-1} j_n i_{n+1} \cdots i_N} = \sum_{i_n=1}^{I_n} (\tensor{X}_{(i_1 i_2 \cdots i_N)}  {u}_{j_n i_n}) .
	\end{equation}
\end{definition}

\vspace{-2mm}

\subsection{Tensor Factorization Methods}
\label{sec:prelim:tf}

\begin{figure}[h!]
	\centering
	\includegraphics[height=3cm, width=6cm]{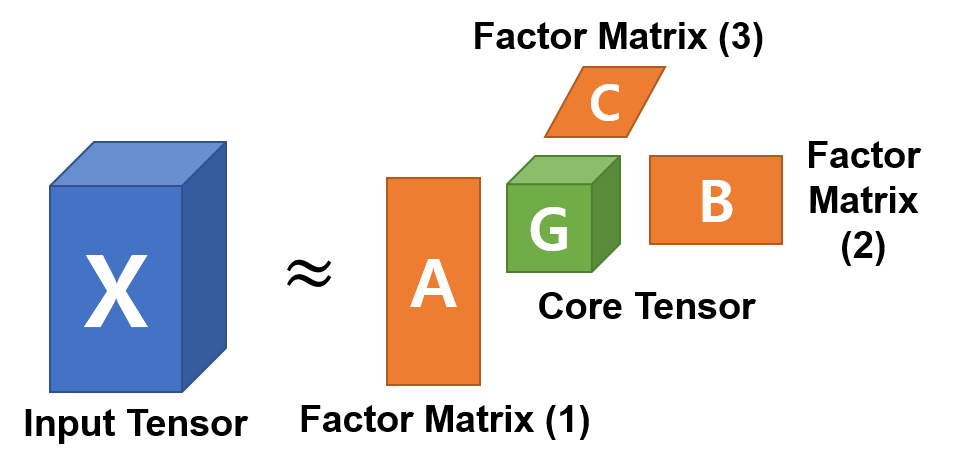}
	\caption{Tucker factorization for a 3-way tensor.}
	\label{fig:3way_tucker}
\end{figure}

Our proposed method \method is based on Tucker factorization, one of the most popular decomposition methods. More details about other factorization algorithms are summarized in Section~\ref{sec:related_work} and \cite{kolda2009tensor}.

\begin{definition}[Tucker Factorization]
	Given an N-order tensor $\tensor{X}$ $(\in \mathbb{R}^{I_{1} \times ... \times I_{N}})$, Tucker factorization
	approximates $\tensor{X}$ by a core tensor $\tensor{G}$ $(\in \mathbb{R}^{J_{1}\times ... \times J_{N}})$ and factor matrices $\{\mathbf{A}^{(n)}\in \mathbb{R}^{I_{n} \times J_n} | n=1...N\}$.
		Figure~\ref{fig:3way_tucker} illustrates a Tucker factorization result for a 3-way tensor.
	Core tensor $\T{G}$ is assumed to be smaller and denser than the input tensor $\T{X}$, and factor matrices $\mathbf{A}^{(n)}$ to be normally orthogonal.
	Regarding interpretations of factorization results,
		each factor matrix $\mathbf{A}^{(n)}$ represents the latent features of the object related to the $n$th mode of $\tensor{X}$, and each element of a core tensor $\tensor{G}$ indicates the weights of the relations composed of columns of factor matrices.
	 Tucker factorization with tensor operations is presented as follows:
	\begin{equation} \label{eq:TF_FULL}
		\min_{\T{G},\mathbf{A}^{(1)},...,\mathbf{A}^{(N)}} || \tensor{X} - \tensor{G}\times_{1}\mathbf{A}^{(1)} \cdots \times_{N}\mathbf{A}^{(N)}||.
	\end{equation}

	Note that the loss function~\eqref{eq:TF_FULL} is calculated by all entries of $\tensor{X}$, and whole missing values of $\tensor{X}$ are regarded as zeros.
	Concurrently, an element-wise expression is given as follows:
	\begin{equation} \label{eq:recon2}
	\tensor{X}_{(i_1,...,i_N)}\approx \sum_{\forall{(j_1,...,j_N)}\in\T{G}}\tensor{G}_{(j_1,...,j_N)}\prod_{n=1}^{N}a^{(n)}_{i_{n}j_n}.
	\end{equation}
		Equation~\eqref{eq:recon2} is used to predict values of missing entries after $\T{G},\mathbf{A}^{(1)},...,\mathbf{A}^{(N)}$ are found.
	We define the reconstruction error of Tucker factorization of $\T{X}$ by the following rule.
	Note that $\Omega$ is the set of observable entries of $\tensor{X}$.
		\begin{equation} \label{eq:reconfull}
		\footnotesize
		\hspace{-5mm}
		 \sqrt{\sum_{\forall{(i_1,...,i_N)}\in\Omega}{\left(\tensor{X}_{(i_1,...,i_N)}-\sum_{\forall{(j_1,...,j_N)}\in\T{G}}\tensor{G}_{(j_1,...,j_N)}\prod_{n=1}^{N}a^{(n)}_{i_{n}j_n}\right)^{2}}}
		\end{equation}

\end{definition}

%
%
%
%

\begin{definition}[Sparse Tucker Factorization] \label{def:partial}
	Given a tensor $\tensor{X}~(\in \mathbb{R}^{I_{1} \times ... \times I_{N}})$
	with observable entries $\Omega$,
	the goal of sparse Tucker factorization of $\tensor{X}$ is to find factor matrices $\mathbf{A}^{(n)}$ $(\in \mathbb{R}^{I_{n} \times J_{n}}, n=1,\cdots,N)$ and a core tensor $\tensor{G}~(\in \mathbb{R}^{J_1 \times...\times J_N})$, which minimize~\eqref{eq:TF_PARTIAL}.
		\begin{multline}
		L(\tensor{G},\mat{A}^{(1)},...,\mat{A}^{(N)}) = \\
		\hspace{-4mm}
		 \sum_{\forall(i_1,...,i_N)\in\Omega}{\left(\tensor{X}_{(i_1,...,i_N)}-\sum_{\forall(j_1,...,j_N)\in\T{G}}\tensor{G}_{(j_1,...,j_N)}\prod_{n=1}^{N}a^{(n)}_{i_{n}j_n}\right)^{2}} \\
		+  \lambda\sum_{n=1}^{N}{{\| \mat{A}^{(n)} \|}^{2}}
		\label{eq:TF_PARTIAL}
		\end{multline}
\end{definition}
	Note that the loss function \eqref{eq:TF_PARTIAL} only depends on observable entries of $\tensor{X}$, and $L_{2}$ regularization is used in \eqref{eq:TF_PARTIAL} to prevent overfitting, which has been generally utilized in machine learning problems \cite{chen2011linear, koren2009matrix, zhou2008large}.

\begin{definition}[Alternating Least Squares]
		\label{def:ALS}
		To minimize the loss functions \eqref{eq:TF_FULL} and \eqref{eq:TF_PARTIAL}, an alternating least squares (ALS) technique is widely used~\cite{kolda2009tensor, MET}, which updates a factor matrix or a core tensor while keeping all others fixed.

		\begin{algorithm} [h!]
			\small
			\caption{Tucker-ALS} \label{alg:ALS_FULL}
			\SetKwInOut{Input}{Input}
			\SetKwInOut{Output}{Output}
			\Input{
				Tensor $\T{X} \in \mathbb{R}^{I_1 \times I_2 \times \cdots \times I_N}$, and \\
				core tensor dimensionality $J_1,...,J_N$. \\
			}	
			\Output{
				Updated factor matrices $\mathbf{A}^{(n)} \in \mathbb{R}^{I_n \times J_n}$  $(n=1, ... , N)$, and \\
				updated core tensor $\T{G} \in \mathbb{R}^{J_1 \times J_2 \times \cdots \times J_N}$. \\
			}
			\vspace{1.5mm}
			initialize all factor matrices $\mathbf{A}^{(n)}$\\
			\Repeat{the max. iteration or reconstruction error converges} {
				\For{$n=1...N$}{
					$\tensor{Y} \leftarrow \tensor{X}\times_{1}\mathbf{A}^{(1)\mat{T}} \cdots \times_{n-1}\mathbf{A}^{(n-1)\mat{T}}\times_{n+1}\mathbf{A}^{(n+1)\mat{T}}\cdots\times_{N}\mathbf{A}^{(N)\mat{T}} $ \\
					$\mathbf{A}^{(n)} \leftarrow J_{n}$ leading left singular vectors of $\T{Y}_{(n)}$
				}
			}
			$\tensor{G} \leftarrow \tensor{X}\times_{1}\mathbf{A}^{(1)\mathsf{T}} \cdots \times_{N}\mathbf{A}^{(N)\mathsf{T}}$
		\end{algorithm}

		Algorithm~\ref{alg:ALS_FULL} describes a conventional Tucker factorization based on the ALS, which is called the \textit{higher-order orthogonal iteration} (HOOI)  (see~\cite{kolda2009tensor} for details).
		The computational and memory bottleneck of  Algorithm~\ref{alg:ALS_FULL} is updating factor matrices $\mathbf{A}^{(n)}$ (lines 4-5), which requires tensor operations and SVD.
		Specifically, Algorithm~\ref{alg:ALS_FULL} requires storing a full-dense matrix $\T{Y}_{(n)}$, and the amount of memory needed for storing $\T{Y}_{(n)}$ is $O(I_n\prod_{m \neq n}{J_m})$. The required memory grows rapidly when the order, the dimensionality, or the rank of a tensor increase, and ultimately causes \textit{intermediate data explosion}~\cite{kang2012gigatensor}. Moreover, Algorithm~\ref{alg:ALS_FULL} computes SVD for a given $\T{Y}_{(n)}$, where the complexity of exact SVD is $O(\min(I_n\prod_{m \neq n}{J_{m}^2},I_{n}^2\prod_{m \neq n}{J_m}))$.
		The computational costs for SVD increase rapidly as well for a large-scale tensor.
		Notice that Algorithm~\ref{alg:ALS_FULL} assumes missing entries of $\T{X}$ as zeros during the update process (lines 4-5), and core tensor $\tensor{G}$ (line 7) is uniquely determined and relatively easy to be computed by an input tensor and factor matrices.

	 In summary, applying the naive Tucker-ALS algorithm on sparse tensors generates severe accuracy and scalability issues.
	 Therefore, Algorithm~\ref{alg:ALS_FULL} needs to be revised to focus only on observed entries and scale for large-scale tensors at the same time. In that case,
	 an alternative ALS approach is applicable to Algorithm~\ref{alg:ALS_FULL}, which is utilized for partially observable matrices~\cite{zhou2008large} and CP factorizations~\cite{ShinK17}. The alternative ALS approach is discussed in Section~\ref{sec:proposed_method}.
\end{definition}

\begin{definition}[Intermediate Data]
	We define intermediate data as memory requirements for updating $\mathbf{A}^{(n)}$ (lines 4-5 in Algorithm~\ref{alg:ALS_FULL}), excluding memory space for storing $\T{X}$, $\T{G}$, and $\mathbf{A}^{(n)}$.
	The size of intermediate data plays a critical role in determining which Tucker factorization algorithms are space-efficient, as we will discuss in Section~\ref{sec:complexity_analysis}.
\end{definition}

	\section{Proposed Method}
	\label{sec:proposed_method}

We describe \method, our proposed Tucker factorization algorithm for sparse tensors.
As described in Definition~\ref{def:ALS}, the computational and memory bottleneck of the standard Tucker-ALS algorithm occurs while updating factor matrices.
Therefore, it is imperative to update them efficiently in order to maximize scalability of the algorithm.
However, there are several challenges in designing an optimized algorithm for updating factor matrices.

\begin{enumerate}
	\item {\textbf{Exploit the characteristic of sparse tensors.}}
		Sparse tensors are composed of a vast number of missing entries and a  small number of observable entries.  How can we exploit the sparsity of given tensors to design an accurate and scalable algorithm for updating factor matrices?
	
	\item {\textbf{Maximize scalability.}}
		The aforementioned Tucker-ALS algorithm suffers from \textit{intermediate data explosion} and high computational costs while updating factor matrices. How can we formulate efficient algorithms for updating factor matrices in terms of time and memory?
	\item {\textbf{Parallelization.}}
		 It is crucial to avoid race conditions and adjust workloads between threads to thoroughly employ multi-core parallelism. How can we apply data parallelism on updating factor matrices in order to scale up linearly with respect to the number of threads?
\end{enumerate}

To overcome the above challenges, we suggest the following main ideas, which we describe in later subsections.

\begin{enumerate}
	\item {\textbf{Row-wise update rule}}
		fully exploits the sparsity of a given tensor and enhances the accuracy of a factorization (Figure~\ref{fig:mainplot} and Section~\ref{sec:method_factor_matrices}).
	\item {\textbf{\TOP and \APP}}
		accelerate the update process by caching intermediate calculations and truncating ``noisy'' entries from a core tensor,
		while \method itself provides a memory-optimized algorithm by default  (Section~\ref{sec:optimization}).
	\item {\textbf{Careful distribution of work}}
	assures that each thread has independent tasks and balanced workloads when \method updates factor matrices. (Section~\ref{sec:parallel}).
\end{enumerate}

	We first suggest an overview of how \method factorizes sparse tensors using Tucker method in Section~\ref{sec:overview}.
After that, we describe details of our main ideas in Sections~\ref{sec:method_factor_matrices}$\thicksim$\ref{sec:parallel}, and we offer a theoretical analysis of \method in Section~\ref{sec:analysis}.

\vspace{-3mm}

\subsection{Overview}
\label{sec:overview}

\method provides an efficient Tucker factorization algorithm for sparse tensors.

\begin{figure}[h!]
	\centering
	\includegraphics[height=6.5cm, width=9cm]{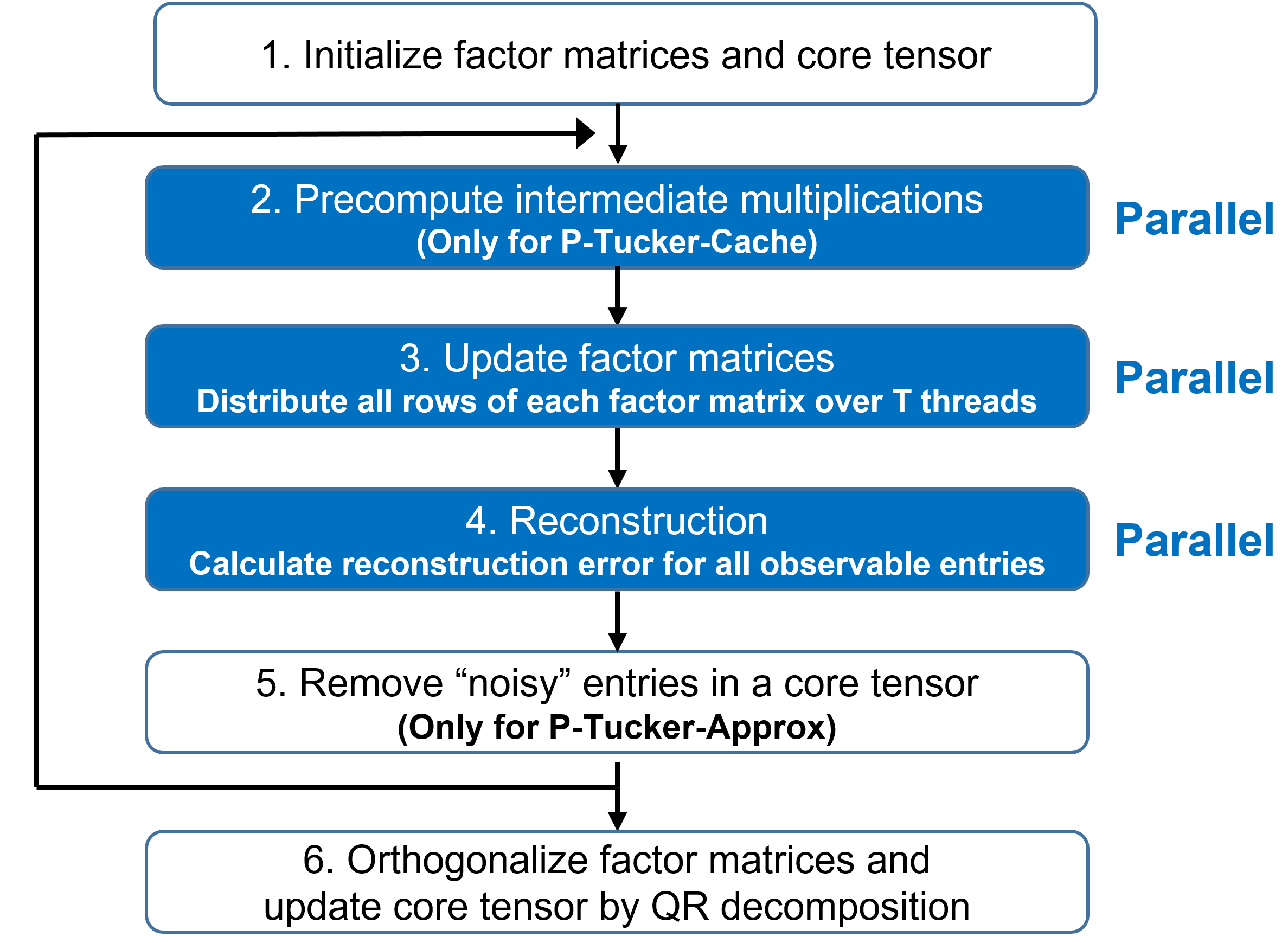}
	\caption{
			An overview of \method. After initialization, \method updates factor matrices in a fully-parallel way. When the reconstruction error converges, \method performs QR decomposition to make factor matrices orthogonal and updates a core tensor.
	}
	\label{fig:workflow}
\end{figure}

 Figure~\ref{fig:workflow}  and Algorithm~\ref{alg:partial} describe the main process of \method. First, \method initializes all $\mathbf{A}^{(n)}$ and $\T{G}$ with random real values between 0 and 1 (step 1 and line 1).
After that, \method updates factor matrices (steps 2-3 and line 3) by Algorithm~\ref{alg:proposed} explained in Section~\ref{sec:method_factor_matrices}.
	When all factor matrices are updated, \method measures reconstruction error using~\eqref{eq:reconfull} (step 4 and line 4). In case of \APP (step 5 and lines 5-6), \APP removes ``noisy" entries of $\T{G}$ by Algorithm~\ref{alg:APP} explained in Section~\ref{sec:optimization}. \method stops iterations if the error converges or the maximum iteration is reached (line 7).
Finally, \method performs QR decomposition on all $\mathbf{A}^{(n)}$ to make them orthogonal and updates $\T{G}$ (step 6 and lines 8-11).
Specifically, QR decomposition~\cite{linear_algebra} on each $\mathbf{A}^{(n)}$ is defined as follows:
\begin{equation}
\mathbf{A}^{(n)}=\mathbf{Q}^{(n)}\mathbf{R}^{(n)},~n=1...N \\
\end{equation}
where $\mathbf{Q}^{(n)}\in\mathbb{R}^{I_n \times J_n}$ is \textit{column-wise orthonormal} and $\mathbf{R}^{(n)}\in\mathbb{R}^{J_n \times J_n}$ is \textit{upper-triangular}.
Therefore, by substituting $\mathbf{Q}^{(n)}$ for $\mathbf{A}^{(n)}$, \method succeeds in making factor matrices orthogonal.
Core tensor $\T{G}$ must be updated accordingly in order to maintain the same reconstruction error. According to~\cite{multilinear_kolda}, the update rule of core tensor $\T{G}$ is given as follows:
\begin{equation}
\T{G} \leftarrow \T{G} \times_{1} \mathbf{R}^{(1)} \cdots \times_{N} \mathbf{R}^{(N)}.
\end{equation}

\begin{algorithm} [t!]
	\small
	\caption{\method for Sparse Tensors} \label{alg:partial}
	\SetKwInOut{Input}{Input}
	\SetKwInOut{Output}{Output}
	\Input{
		Tensor $\T{X} \in \mathbb{R}^{I_1 \times I_2 \times \cdots \times I_N}$, \\
		core tensor dimensionality $J_1,...,J_N$, and \\
		truncation rate $p$ (\APP only).\\
	}	
	\Output{
		Updated factor matrices $\mathbf{A}^{(n)} \in \mathbb{R}^{I_n \times J_n} (n=1,...,N)$, \\
		and updated core tensor $\T{G} \in \mathbb{R}^{J_1 \times J_2 \times \cdots \times J_N}$. \\
	}
	initialize factor matrices $\mathbf{A^{(n)}}~(n=1,...,N)$ and core tensor $\tensor{G}$ \\
	\Repeat{the maximum iteration or $\|\tensor{X}-\tensor{X'}\|$ converges}{
		update factor matrices $\mathbf{A}^{(n)}~(n=1,...,N)$ by Algorithm~\ref{alg:proposed} \\
		calculate reconstruction error using~\eqref{eq:reconfull} \\
		\If(\Comment*[f]{$\T{G}$ Truncation}){\APP}{\label{eq:main_TR}
			remove ``noisy" entries of $\T{G}$ by Algorithm~\ref{alg:APP}
		}
	}
	\For{n = 1...$N$}{
		$\mathbf{A}^{(n)} \rightarrow \mathbf{Q}^{(n)}\mathbf{R}^{(n)}$\Comment*[f]{QR decomposition}\\
		$\mathbf{A}^{(n)} \leftarrow \mathbf{Q}^{(n)}$\Comment*[f]{Orthogonalize $\mathbf{A}^{(n)}$}\\
		$\tensor{G} \leftarrow \tensor{G}\times_{n}\mathbf{R}^{(n)}$\Comment*[f]{Update core tensor $\T{G}$}
	}
\end{algorithm}

\begin{figure*}[t!]
	\centering
	\vspace{-3mm}
	\includegraphics[height=7cm]{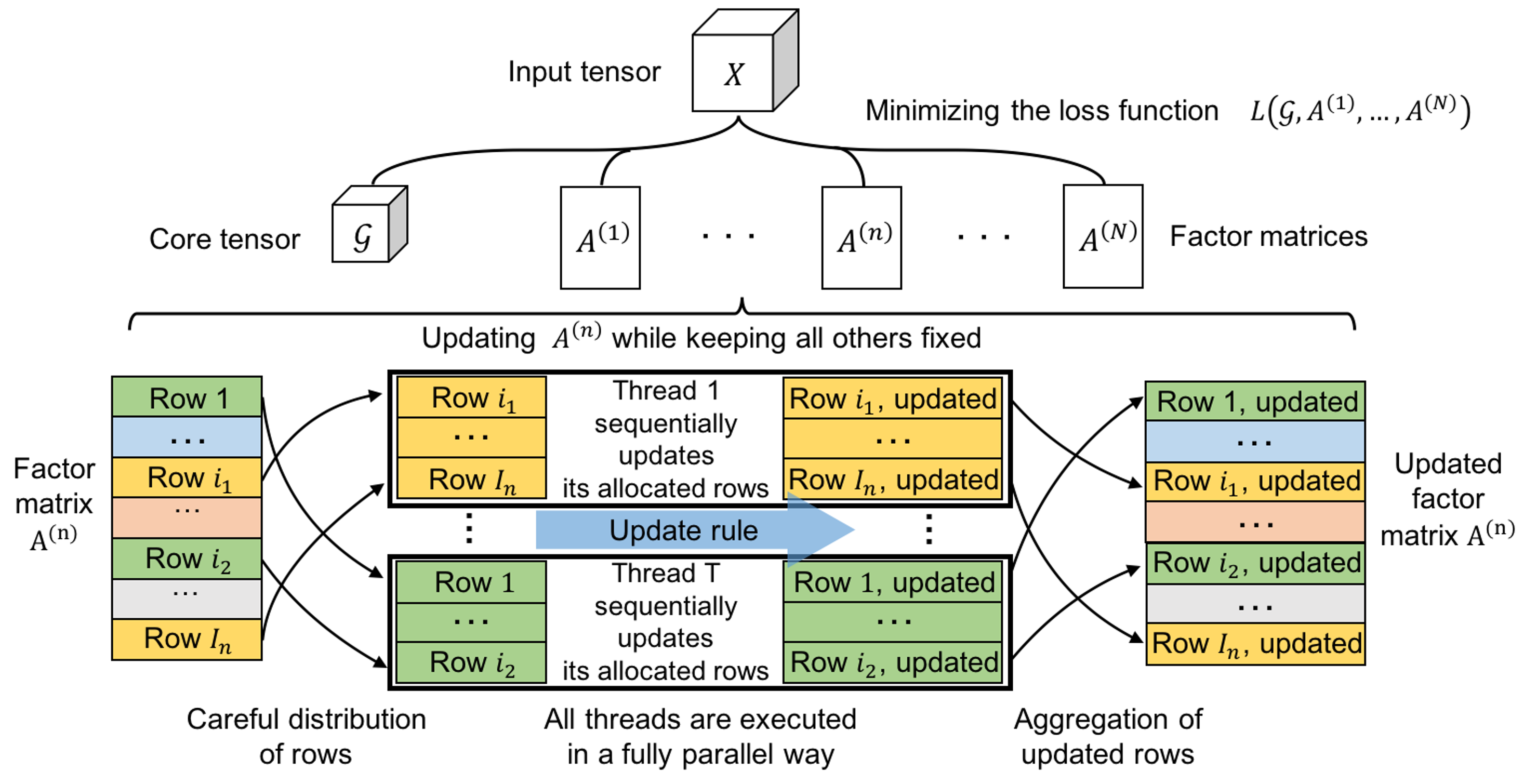}
	\caption{
		An overview of updating factor matrices. \method performs a row-wise ALS which updates each row of a factor matrix $\mathbf{A}^{(n)}$ while keeping all the others fixed. Since all rows of a factor matrix are independent of each other in terms of minimizing the loss function~(\ref{eq:TF_PARTIAL}), \method fully exploits multi-core parallelism to update all rows of $\mathbf{A}^{(n)}$.
		First, all rows are carefully distributed to all threads to achieve a uniform workload among them. After that, all threads update their allocated rows in a fully parallel way. In a single thread, the allocated rows are updated in a sequential way. Finally, \method aggregates all updated rows from all threads to update $\mathbf{A}^{(n)}$. \method iterates this update procedure for all factor matrices one by one.
	}
	\label{fig:mainplot}
\end{figure*}


\vspace{-3mm}

\subsection{Row-wise Updates of Factor Matrices}
\label{sec:method_factor_matrices}

\method updates factor matrices in a row-wise manner based on ALS, where an update rule for a row is computed by only observed entries of a tensor.
From a high-level point of view, as most ALS methods do, \method updates a factor matrix at a time while maintaining all others fixed. However,
when all other matrices are fixed, there are several approaches~\cite{ShinK17} for updating a single factor matrix.
Among them, \method selects a row-wise update method;
a key benefit of the row-wise update is that all rows of a factor matrix are independent of each other in terms of minimizing the loss function~(\ref{eq:TF_PARTIAL}). This property enables applying multi-core parallelism on updating factor matrices.
Given a row of a factor matrix, an update rule is derived by computing a gradient with respect to the given row and setting it as zero, which minimizes the loss function~\eqref{eq:TF_PARTIAL}.
The update rule for the $i_n$th row of the $n$th factor matrix $\mat{A}^{(n)}$ (see Figure~\ref{fig:Factor_one}) is given as follows; the proof of Equation~\eqref{eq:rowupdate} is in Theorem~\ref{theorem1}.
			\begin{multline} \label{eq:rowupdate}
			\begin{aligned}
			[a^{(n)}_{i_{n}1}, ..., a^{(n)}_{i_{n}J_{n}}] \leftarrow \argmin{[a^{(n)}_{i_{n}1}, ..., a^{(n)}_{i_{n}J_{n}}]}{L(\T{G},\mat{A}^{(1)},...,\mat{A}^{(N)})} \\= \vect{c}_{i_n:}^{(n)} \times [\mat{B}_{i_n}^{(n)}+\lambda \mathbf{I}_{J_n}]^{-1}
			\end{aligned}
			\end{multline}
			where $\mat{B}_{i_n}^{(n)}$ is a  ${J_n \times J_n}$ matrix whose $(j_1,j_2)$th entry is
			\begin{equation} \label{eq:rowB}
			 \sum_{\forall(i_1,...,i_N)\in\Omega_{i_n}^{(n)}}\delta_{(i_1,...,i_N)}^{(n)}(j_1) \delta_{(i_1,...,i_N)}^{(n)}(j_2),
			\end{equation}
			$\vect{c}_{i_n:}^{(n)}$ is a length ${J_n}$ vector whose $j$th entry is
			\begin{equation} \label{eq:rowC}
			\sum_{\forall(i_1,...,i_N)\in\Omega_{i_n}^{(n)}}\tensor{X}_{(i_1,...,i_N)}  \delta_{(i_1,...,i_N)}^{(n)}(j),
			\end{equation}
			$\delta_{(i_1,...,i_N)}^{(n)}$ is a length ${J_n}$ vector whose $j$th entry is
			\begin{equation} \label{eq:delta}
			 \sum_{\forall(j_1...j_n=j...j_N)\in\T{G}}\tensor{G}_{(j_1...j_n=j...j_N)}\prod_{k \neq n} a^{(k)}_{i_k j_k},
			\end{equation}

			\begin{figure}[t!]
				\centering
				\includegraphics[width=8.5cm]{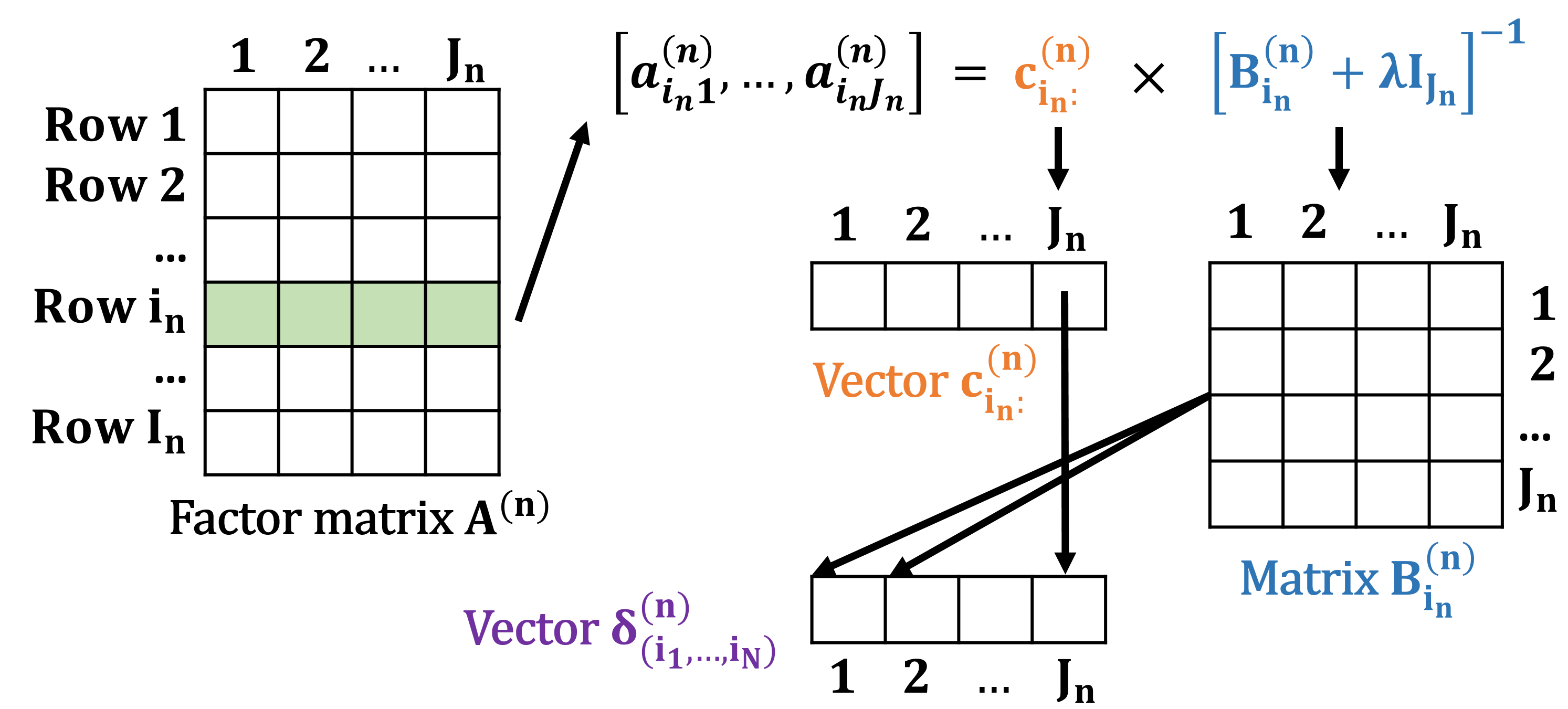}
				\vspace{2mm}
				\caption{
					An illustration of an update rule for a row of a factor matrix. \method requires three intermediate data $\mat{B}_{i_n}^{(n)}$, $\vect{c}_{i_n:}^{(n)}$, and $\delta_{(i_1,...,i_N)}^{(n)}$ for updating the $i_n$th row of $\mathbf{A^{(n)}}$.  Note that $\lambda$ is a regularization parameter, and $\mathbf{I}_{J_n}$ is a $J_n \times J_n$ identity matrix.
				}
				\label{fig:Factor_one}
			\end{figure}

\noindent $\Omega_{i_n}^{(n)}$ indicates the subset of $\Omega$ whose $n$th mode's index is $i_n$, $\lambda$ is a regularization parameter, and $\mathbf{I}_{J_n}$ is a $J_n \times J_n$ identity matrix.
As shown in Figure~\ref{fig:Factor_one}, the update rule for the $i_n$th row of $\mat{A}^{(n)}$ requires three intermediate data $\mat{B}_{i_n}^{(n)}$, $\vect{c}_{i_n:}^{(n)}$, and $\delta_{(i_1,...,i_N)}^{(n)}$. Those data are computed by the subset of observable entries $\Omega_{i_n}^{(n)}$. Thus, computational costs of updating factor matrices are proportional to the number of observable entries,
which lets \method  fully exploit the sparsity of given tensors.
Moreover, \method predicts missing values of a tensor using~\eqref{eq:recon2}, not as zeros.  Equation~\eqref{eq:recon2} is computed by updated factor matrices and a core tensor, and they are learned by observed entries of a tensor. Hence, \method not only enhances the accuracy of factorizations, but also reflects the latent-characteristics of observed entries of a tensor.
Note that a matrix $[\mat{B}_{i_n}^{(n)}+\lambda \mathbf{I}_{J_n}]$ is positive-definite and invertible, and a proof of the update rule is summarized in Section~\ref{sec:convergence}.

			\begin{algorithm} [t!]
				\footnotesize
				\caption{\method for Updating Factor Matrices} \label{alg:proposed}
				\SetKwInOut{Input}{Input}
				\SetKwInOut{Output}{Output}
				\Input{
					Tensor $\T{X} \in \mathbb{R}^{I_1 \times I_2 \times \cdots \times I_N}$, \\
					factor matrices $\mathbf{A}^{(n)} \in \mathbb{R}^{I_n \times J_n}~(n=1, ... , N)$,
					\\core tensor $\T{G} \in \mathbb{R}^{J_1 \times J_2 \times \cdots \times J_N}$ , and\\
					cache table $Pres \in \mathbb{R}^{|\Omega| \times |\T{G}|}$ (\TOP only).\\
				}
				\Output{
					Updated factor matrices $\mathbf{A}^{(n)} \in \mathbb{R}^{I_n \times J_n}~(n=1, ... ,N)$.
				}
					\If(\Comment*[f]{Precompute $Pres$}){\TOP}{
						\For(\Comment*[f]{In parallel}){${\alpha=\forall(i_1,...,i_N)}\in\Omega$}{ \label{eq:pres1}
							\For{${\beta=\forall(j_1,...,j_N)}\in\tensor{G}$}{
								$ Pres[\alpha][\beta] \leftarrow \T{G}_{\beta}\prod_{k=1}^{N}{a_{i_k j_k}^{(k)}}$  \label{eq:pres2}
							}
						}
					}
					\For(\Comment*[f]{nth factor matrix}){n = 1...$N$}{ \label{eq:nth}
						\For(\Comment*[f]{$i_n$th row, in parallel}){$i_{n}$ = 1...$I_{n}$}{ \label{eq:row}
							\For{$\alpha=\forall(i_1,...,i_N)\in\Omega_{i_n}^{(n)}$}{ \label{eq:nnz}
								\For(\Comment*[f]{Compute $\delta$}){$\beta=\forall(j_1,...,j_N)\in\tensor{G}$}{
									\If{\MOP}{
										$\delta_{\alpha}^{(n)}(j_n) \leftarrow \delta_{\alpha}^{(n)}(j_n)  +
										\tensor{G}_{\beta} \prod_{k \neq n}a_{i_k j_k}^{(k)}$\\ \label{eq:MOP_delta}
									}
									\If{\TOP}{
										$\delta_{\alpha}^{(n)}(j_n) \leftarrow \delta_{\alpha}^{(n)}(j_n)  + \frac{ Pres[\alpha][\beta]}{a_{i_n j_n}^{(n)}}$\\ \label{eq:TOP_delta}
									}
								}
								calculate $\mat{B}_{i_n}^{(n)}$ and $\vect{c}_{i_n :}^{(n)}$ using \eqref{eq:rowB} and \eqref{eq:rowC}\label{eq:main_BC}\\
							}
							find the inverse matrix of $[\mat{B}_{i_n}^{(n)}+\lambda \mathbf{I}_{J_n}]$\label{eq:main_inverse}\\
							update  $[a_{i_n 1}^{(n)}, \cdots, a_{i_n J_n}^{(n)}]$ using \eqref{eq:rowupdate}\label{eq:main_rowupdate}	
						}
						\If(\Comment*[f]{Update $Pres$}){\TOP}{ \label{eq:main_pres1}
							\For(\Comment*[f]{In parallel}){$\alpha=\forall(i_1,...,i_N)\in\Omega$}{
								\For{$\beta=\forall(j_1,...,j_N)\in\tensor{G}$}{
									$Pres[\alpha][\beta]\leftarrow \frac{Pres[\alpha][\beta]}{(a,old)_{i_n j_n}^{(n)}} \times (a,new)_{i_n j_n}^{(n)}$ \label{eq:update_pres}
								}
							}
						}
					}
			\end{algorithm}

Algorithm~\ref{alg:proposed} describes how \method updates factor matrices.
First, in case of \TOP (lines 1-4), it computes the values of all entries in a cache table $Pres$ $(\in \mathbb{R}^{|\Omega| \times |\T{G}|})$ which caches intermediate multiplication results generated while updating factor matrices. This memoization technique allows \TOP to be a time-efficient algorithm.
Next, \method chooses a row $\vect{a}_{i_n :}^{(n)}$ of a factor matrix $\mat{A}^{(n)}$ to update (lines 5-6).
After that, \method computes $\mat{B}_{i_n}^{(n)}$ and $\vect{c}_{i_n:}^{(n)}$ required for updating a row $\vect{a}_{i_n :}^{(n)}$ (lines 7-13).
\method performs matrix inverse operation on  $[\mat{B}_{i_n}^{(n)}+\lambda \mathbf{I}_{J_n}]$ (line 14) and updates a row $\vect{a}_{i_n :}^{(n)}$ by the multiplication of $\vect{c}_{i_n:}^{(n)}$ and  $[\mat{B}_{i_n}^{(n)}+\lambda \mathbf{I}_{J_n}]^{-1}$ (line 15).
In case of \TOP, it recalculates $Pres$ using the existing and updated $\mathbf{A}^{(n)}$ (lines 16-19) whenever $\mathbf{A}^{(n)}$ is updated.
Note that $\alpha$ and $\beta$ indicate an entry of $\tensor{X}$ and $\tensor{G}$, respectively.
\subsection{Variants: \TOP and \APP} \label{sec:optimization}
As discussed in Section~\ref{sec:method_factor_matrices}, \method requires three intermediate data: $\mat{B}_{i_n}^{(n)}$, $\vect{c}_{i_n:}^{(n)}$, and $\delta_{(i_1,...,i_N)}^{(n)}$ whose memory requirements are $O(J_n^2)$. Considering the memory complexity of the naive Tucker-ALS, which is $O(I_n\prod_{m \neq n}{J_m})$, \method successfully provides a memory-optimized algorithm. We can further optimize \method in terms of time by a caching algorithm (\TOP) and an approximation algorithm (\APP).

The crucial difference between \MOP and \TOP lies in the computation of the intermediate vector $\delta$ (lines 9-12 in
Algorithm~\ref{alg:proposed}).
In case of \MOP, updating $\delta$ requires $N$ times of multiplications for a given entry pair $(\alpha,\beta)$ (line 10), which takes $O(N)$.
 However, if we  cache the results of those multiplications for all entry pairs, the update only takes $O(1)$ (line 12).
 This trade-off distinguishes \TOP and \MOP.
 \TOP accelerates intermediate calculations by the memoization technique with the cache table $Pres$.
 Meanwhile, \MOP requires only small vectors $\vect{c}_{i_n:}^{(n)}$ and $\delta_{(i_1,...,i_N)}^{(n)}$ ($\in\mathbb{R}^{J_n}$) and
 a small matrix $\mat{B}_{i_n}^{(n)}$ ($\in\mathbb{R}^{J_n \times J_n}$) as intermediate data.
 Note that when $a_{i_n j_n}^{(n)}$ is 0 (lines 12 and 19), \TOP conducts the multiplications as \MOP does (line 10).
 
\begin{figure}[b!]
	\centering
	\vspace{2mm}
	\begin{subfigure}[t]{0.23\textwidth}
		\includegraphics[width=4.3cm,height=3.6cm]{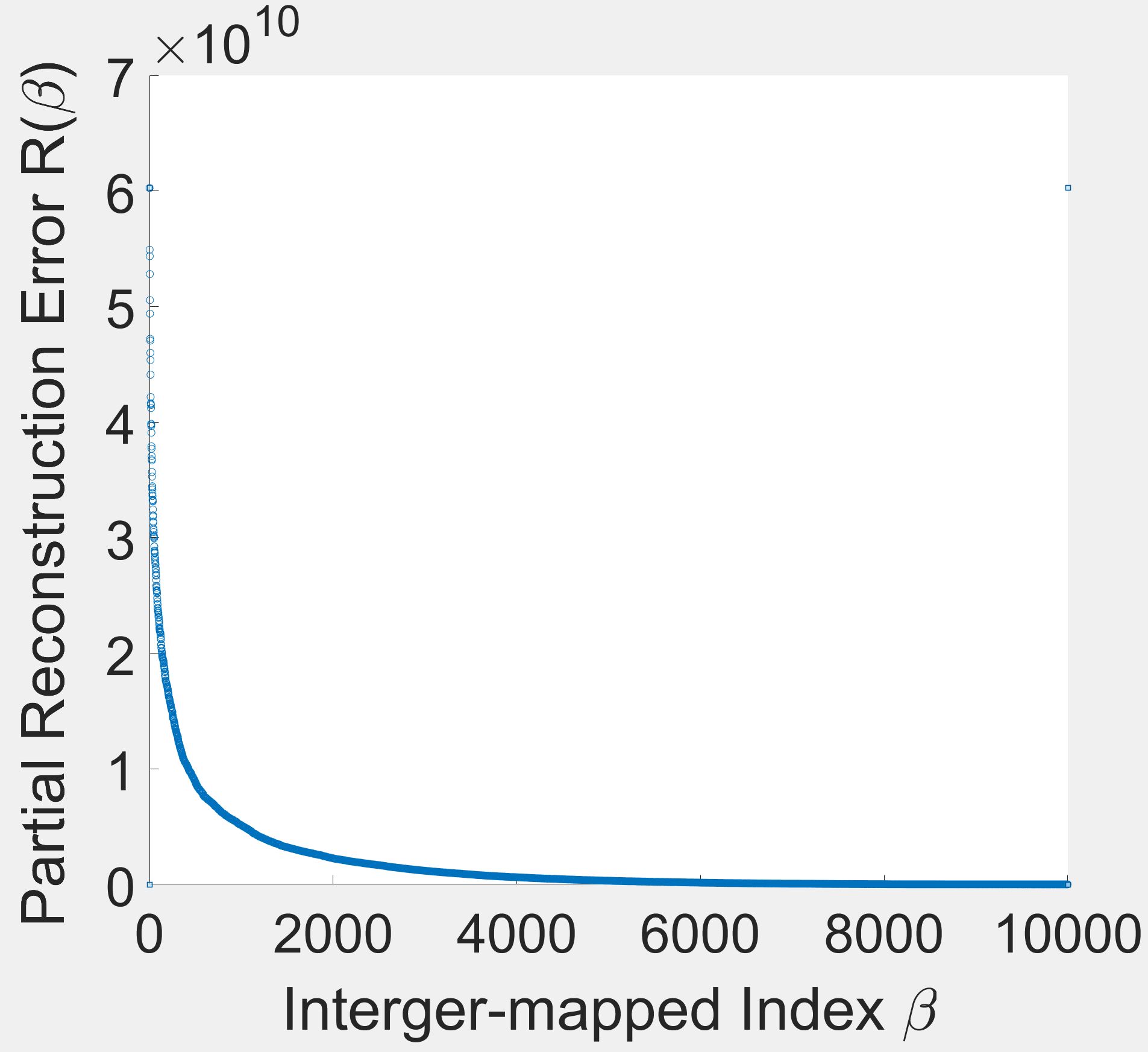}
		\label{fig:RBvalue}
	\end{subfigure}
	\hfill
	\begin{subfigure}[t]{0.23\textwidth}
		\includegraphics[width=4.3cm]{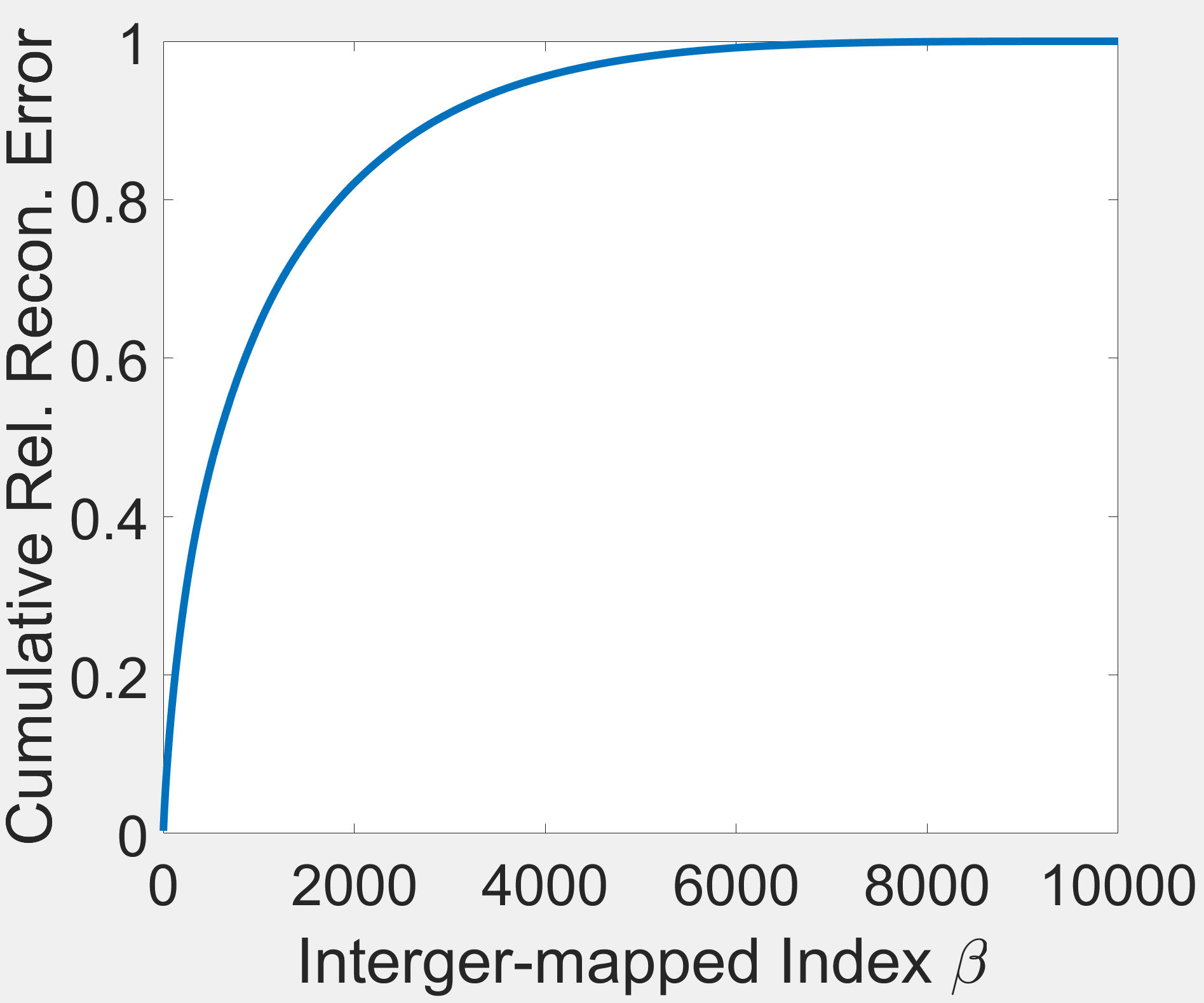}
		\label{fig:CRB}
	\end{subfigure}
	\vspace{-2mm}
	\caption{Distribution of partial reconstruction error $\T{R}(\beta)$ and accumulation of relative reconstruction error produced by an entry $\beta=(j_1,...,j_N)$ of a core tensor $\T{G}$. Note that 20\% ``noisy`` entries of $\T{G}$ generate 80\% of total reconstruction error.}
	\label{fig:APPgraph}
\end{figure}

 The main intuition of \APP is that there exist ``noisy'' entries in a core tensor $\T{G}$, and we can accelerate the update process by truncating these ``noisy'' entries of $\T{G}$.
 Then, how can we determine whether an entry of $\T{G}$ is ``noisy" or not?
 A naive approach could be treating an entry $(j_1,...,j_N)\in\T{G}$ with small $\T{G}_{(j_1,...,j_N)}$ value as "noisy" like the truncated SVD~\cite{Hansen1987}. However, in this case, small-value entries are not always negligible since their contributions to minimizing the error \eqref{eq:reconfull} can be larger than that of large-value ones.
Hence, we propose more precise criterion which regards an entry $\beta=(j_1,...,j_N)\in\T{G}$ with a high $\T{R}(\beta)$ value as ``noisy''.
$\T{R}(\beta)$ indicates a partial reconstruction error  produced by an entry $\beta$, derived from the sum of terms only related to $\beta$ in \eqref{eq:reconfull}.
Given an entry $\beta=(j_1,...,j_N)\in\T{G}$, $\T{R}(\beta)$ is given as follows:
\begin{multline} \label{eq:recon_partial}
\footnotesize
\hspace{-5mm}
\sum_{\forall{\alpha}\in\Omega}\Bigg({\bigg(\tensor{X}_{\alpha}-\sum_{\forall{\gamma}\in\T{G}}\tensor{G}_{\gamma}\prod_{n=1}^{N}a^{(n)}_{i_{n}j_n}\bigg)^{2}} -~ {\bigg(\tensor{X}_{\alpha}-\sum_{\forall{\gamma} \neq \beta}\tensor{G}_{\gamma}\prod_{n=1}^{N}a^{(n)}_{i_{n}j_n}\bigg)^{2}}\Bigg) =\\
\footnotesize
\hspace{-2mm}
 \sum_{\forall{\alpha}\in\Omega}{(\tensor{G}_{\beta}\prod_{n=1}^{N}a^{(n)}_{i_{n}j_n})\left(-2\tensor{X}_{\alpha}+\tensor{G}_{\beta}\prod_{n=1}^{N}a^{(n)}_{i_{n}j_n}+2\sum_{\forall{\gamma}\neq\beta}\tensor{G}_{\gamma}\prod_{n=1}^{N}a^{(n)}_{i_{n}j_n}\right)}.
\hspace{-5mm}
\end{multline}

Note that we use $\alpha$, $\beta$, and $\gamma$ symbols to simplify the equation.
$\T{R}(\beta)$ suggests a more precise guideline of ``noisy'' entries since $\T{R}(\beta)$ is a part of \eqref{eq:reconfull}, while the naive approach assumes the error based on the value $\T{G}_{(j_1,...,j_N)}$.
Figure~\ref{fig:APPgraph} illustrates a distribution of $\T{R}(\beta)$ and a cumulative function of relative reconstruction error on the latest MovieLens dataset ($J=10$). As expected by our intuition, only 20\% entries of $\T{G}$ generate about 80\% of total reconstruction error. Algorithm~\ref{alg:APP} describes how \APP truncates  ``noisy'' entries in $\T{G}$. It first computes $\T{R}(\beta)$ (lines 1-2) for all entries in $\T{G}$, and sort $\T{R}(\beta)$ in descending order (line 3) as well as their indices. Finally, it truncates top-$p|\T{G}|$  ``noisy'' entries of $\T{G}$ (line 4).
\APP performs Algorithm~\ref{alg:APP} for each iteration (lines 3-6 in Algorithm~\ref{alg:partial}), which reduces the number of non-zeros in $\T{G}$ step-by-step. Therefore, the elapsed time per iteration also decreases since the time complexity of \APP depends on the number of non-zeros $|\T{G}|$.
Practically, we note that \APP may require few iterations to run faster than \method due to overheads from calculating $\T{R}(\beta)$, which is computed for all iterations.

\begin{algorithm} [t!]
	\small
	\caption{Removing noisy entries of a core tensor $\T{G}$ in \APP} \label{alg:APP}
	\SetKwInOut{Input}{Input}
	\SetKwInOut{Output}{Output}
	\Input{
		Tensor $\T{X} \in \mathbb{R}^{I_1 \times I_2 \times \cdots \times I_N}$, \\
		factor matrices $\mathbf{A}^{(n)} \in \mathbb{R}^{I_n \times J_n} (n=1, ... ,N)$, \\
		core tensor $\T{G} \in \mathbb{R}^{J_1 \times J_2 \times \cdots \times J_N}$, and\\
		truncation rate $p~(0 < p < 1).$\\
	}	
	\Output{
		Truncated core tensor $\T{G'} \in \mathbb{R}^{J_1 \times J_2 \times \cdots \times J_N}$. \\
	}
	\For{$\beta=\forall(j_1,...,j_N)\in\tensor{G}$}{
		compute a partial reconstruction error $\T{R}(\beta)$ by~\eqref{eq:recon_partial}
	}
	sort $\T{R}(\beta)$ in descending order with their indices \\
	remove $p|\T{G}|$ entries in $\T{G}$, whose $\T{R}(\beta)$ value are ranked within top-$p|\T{G}|$ among all $\T{R}(\beta)$ values.
\end{algorithm}


With the above optimizations, \method becomes the most time and memory efficient method in theoretical and experimental perspectives (see Table~\ref{table::analysis}).

%

\subsection{Careful Distribution of Work}
\label{sec:parallel}
	There are three sections where multi-core parallelization is applicable in Algorithms~\ref{alg:partial} and \ref{alg:proposed}.
	The first section (lines 2-4 and 17-19 in Algorithm~\ref{alg:proposed}) is for \TOP when it computes and updates the cache table $Pres$.
	 The second section (lines 6-15 in Algorithm~\ref{alg:proposed}) is for updating factor matrices, and the last section (line 4 in Algorithm~\ref{alg:partial}) is for measuring the reconstruction error.
	 For each section, \method carefully distributes tasks to threads while maintaining the independence between them. Furthermore, \method utilizes a dynamic scheduling method~\cite{openmp} to assure that each thread has balanced workloads, which directly affects the performance (see Section~\ref{sec:exp:thread_scalability}).
	The details of how \method parallelizes each section are as follows. Note that $T$ indicates the number of threads used for parallelization.
	\begin{itemize}
		\item{\textbf{Section 1: Computing and Updating Cache Table $Pres$ (Only for \TOP).}
	All rows of $Pres$ are independent of each other when they are computed or updated.
	Thus, \method distributes all rows equally over $T$ threads, and each thread computes or updates allocated rows independently using static scheduling.
	}
	\item{\textbf{Section 2: Updating Factor Matrices.}
	All rows of $\mathbf{A}^{(n)}$ are independent of each other regarding minimizing the loss function~(\ref{eq:TF_PARTIAL}). Therefore, \method distributes all rows uniformly to each thread, and updates them in parallel. Since $|\Omega_{i_n}^{(n)}|$ differs for each row, the workload of each thread may vary considerably. Thus, \method employs dynamic scheduling in this part.
	}
	\item{ \textbf{Section 3: Calculating Reconstruction Error.}
	All observable entries are independent of each other in measuring the reconstruction error. Thus, \method distributes them evenly over $T$ threads, and each thread computes the error separately using static scheduling. At the end, \method aggregates the partial error from each thread.
	}	
\end{itemize}

\subsection{Theoretical Analysis}
\label{sec:analysis}

\subsubsection{Convergence Analysis}
\label{sec:convergence}

We theoretically prove the correctness and the convergence of \method.
\begin{theorem}[Correctness of \method] \label{theorem1}
	The proposed row-wise update rule~\eqref{eq:theorem1} minimizes the loss function~\eqref{eq:TF_PARTIAL} regarding the updated parameters.
	\begin{equation} \label{eq:theorem1}
	\small
	\argmin{[a^{(n)}_{i_{n}1}, ..., a^{(n)}_{i_{n}J_{n}}]}{L(\T{G},\mat{A}^{(1)},...,\mat{A}^{(N)})} = \vect{c}_{i_n:}^{(n)} \times [\mat{B}_{i_n}^{(n)}+\lambda \mathbf{I}_{J_n}]^{-1}
	\end{equation}
\end{theorem}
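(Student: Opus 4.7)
The plan is to minimize the loss~\eqref{eq:TF_PARTIAL} by ordinary calculus: fix every parameter except the target row $[a^{(n)}_{i_n 1}, \ldots, a^{(n)}_{i_n J_n}]$, reduce $L$ to a quadratic form in that row alone, and set its gradient to zero. First I would isolate only the terms of $L$ that involve the target row. Observe that $a^{(n)}_{i_n j}$ only appears in the residual for those observed entries $(i_1,\ldots,i_N) \in \Omega$ whose $n$th index equals $i_n$, i.e.\ for $\alpha \in \Omega_{i_n}^{(n)}$, and in the single regularization contribution $\lambda \sum_{j=1}^{J_n} (a^{(n)}_{i_n j})^2$. All other summands of $L$ are constants with respect to the target row and can be dropped from the minimization.

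Next I would rewrite the inner Tucker reconstruction sum to expose linearity in the target row. For $\alpha = (i_1,\ldots,i_N) \in \Omega_{i_n}^{(n)}$, grouping the core tensor summation by the value of $j_n$ and factoring $a^{(n)}_{i_n j_n}$ out of the product over $k$ yields
\begin{equation}
\sum_{(j_1,\ldots,j_N) \in \T{G}} \tensor{G}_{(j_1,\ldots,j_N)} \prod_{k=1}^{N} a^{(k)}_{i_k j_k} \;=\; \sum_{j=1}^{J_n} a^{(n)}_{i_n j}\, \delta_{\alpha}^{(n)}(j),
\end{equation}
using exactly the definition of $\delta$ in~\eqref{eq:delta}. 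The reduced loss becomes
\begin{equation}
L_{i_n}^{(n)} \;=\; \sum_{\alpha \in \Omega_{i_n}^{(n)}} \!\Bigl(\tensor{X}_{\alpha} - \sum_{j=1}^{J_n} a^{(n)}_{i_n j}\, \delta_{\alpha}^{(n)}(j)\Bigr)^{\!2} + \lambda \sum_{j=1}^{J_n} (a^{(n)}_{i_n j})^2 + \text{const},
\end{equation}
which is a strictly convex quadratic in the unknown row.

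Then I would compute $\partial L_{i_n}^{(n)}/\partial a^{(n)}_{i_n j_1}$ for each $j_1 \in \{1,\ldots,J_n\}$, set the $J_n$ resulting equations to zero, and collect terms. The cross terms assemble into the matrix $\mat{B}_{i_n}^{(n)}$ defined by~\eqref{eq:rowB}, the data terms assemble into $\vect{c}_{i_n:}^{(n)}$ defined by~\eqref{eq:rowC}, and the regularization contributes $\lambda \mathbf{I}_{J_n}$. The normal equations then read $(\mat{B}_{i_n}^{(n)} + \lambda \mathbf{I}_{J_n})\, (\vect{a}_{i_n:}^{(n)})^{\mathsf{T}} = (\vect{c}_{i_n:}^{(n)})^{\mathsf{T}}$, whose solution is exactly~\eqref{eq:theorem1}.

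Finally I would justify that this stationary point is truly the unique global minimizer. The key observation is that $\mat{B}_{i_n}^{(n)}$ is a sum of rank-one outer products $\delta_{\alpha}^{(n)} (\delta_{\alpha}^{(n)})^{\mathsf{T}}$, hence positive semi-definite, so $\mat{B}_{i_n}^{(n)} + \lambda \mathbf{I}_{J_n}$ is positive definite for any $\lambda > 0$, and thus invertible. Since this matrix is also (up to a factor of $2$) the Hessian of $L_{i_n}^{(n)}$, the reduced loss is strictly convex in the target row and the solution is the unique minimizer. The main obstacle I anticipate is purely bookkeeping: correctly separating the inner sum into the $k=n$ factor and the residual product over $k \neq n$ so that the definitions~\eqref{eq:rowB}--\eqref{eq:delta} appear naturally; there is no deeper analytic difficulty since the subproblem is a standard ridge-regularized linear least squares.
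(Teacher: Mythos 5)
Your proposal is correct and follows essentially the same route as the paper: isolate the terms of $L$ involving the target row, expose linearity via $\delta_{\alpha}^{(n)}$, set the gradient to zero, and solve the resulting normal equations $(\mat{B}_{i_n}^{(n)}+\lambda\mathbf{I}_{J_n})(\vect{a}_{i_n:}^{(n)})^{\mathsf{T}}=(\vect{c}_{i_n:}^{(n)})^{\mathsf{T}}$. You additionally justify that the stationary point is the unique global minimizer via positive-definiteness of $\mat{B}_{i_n}^{(n)}+\lambda\mathbf{I}_{J_n}$ (as a sum of outer products plus $\lambda\mathbf{I}$), a point the paper merely asserts and defers to its supplementary material, so your write-up is if anything slightly more complete.
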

\begin{proof}
	\footnotesize
	\[
	\frac{\partial L}{\partial a_{i_n j_n}^{(n)}} = 0, \forall{j_n}, 1 \leq j_n \leq J_{n} \quad
	\]
	\[
	\Leftrightarrow \sum_{\forall{\alpha}\in\Omega_{i_n}^{(n)}} \Bigg(\bigg( \T{X}_{\alpha}-\sum_{\forall{\beta}\in\T{G}}\tensor{G}_{\beta}\prod_{n=1}^{N}a^{(n)}_{i_{n}j_n} \bigg) \times \bigg(-\delta_{\alpha}^{(n)}(j_n)\bigg)\Bigg) + \lambda a_{i_nj_n}^{(n)} = 0
	\]
	\[
	\hspace{-4mm}
	\Leftrightarrow [a^{(n)}_{i_{n}1}, ..., a^{(n)}_{i_{n}J_{n}}] \Bigg( \sum_{\forall{\alpha}\in\Omega_{i_n}^{(n)}} \bigg( \delta_{\alpha}^{(n)\mat{T}} \delta_{\alpha}^{(n)} \bigg)  + \lambda \mathbf{I}_{J_n} \Bigg)
	=\sum_{\forall{\alpha}\in\Omega_{i_n}^{(n)}} \Bigg( \T{X}_{\alpha} \delta_{\alpha}^{(n)}\Bigg)
	\]
	\[
	\Leftrightarrow [a^{(n)}_{i_{n}1}, ..., a^{(n)}_{i_{n}J_{n}}] = \vect{c}_{i_n:}^{(n) } \times [\mat{B}_{i_n}^{(n)}+\lambda \mathbf{I}_{J_n}]^{-1}
	\]
\end{proof}

Note that the full proof of Theorem~\ref{theorem1} is in the supplementary material of \method~\cite{supple}.
\vspace{2mm}
\begin{theorem}[Convergence of \method]
	\method converges since \eqref{eq:TF_PARTIAL} is bounded and decreases monotonically.
\end{theorem}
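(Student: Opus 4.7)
The plan is to invoke the monotone convergence theorem from real analysis: any monotonically non-increasing real sequence that is bounded below must converge. So it suffices to track the sequence of loss values $L_t := L(\T{G},\mat{A}^{(1)},\ldots,\mat{A}^{(N)})$ obtained after the $t$th outer iteration of \method and show (i) $L_t$ has a uniform lower bound, and (ii) $L_{t+1} \le L_t$ for every $t$.

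For boundedness, I would observe that the loss function \eqref{eq:TF_PARTIAL} is a sum of two non-negative pieces: the squared residuals $\sum_{\Omega}(\cdots)^2$ and the regularizer $\lambda\sum_n \|\mat{A}^{(n)}\|^2$. Both are manifestly $\ge 0$, so $L_t \ge 0$ holds trivially and uniformly in $t$.

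For monotonicity, I would exploit the exactness of the row-wise update. Within one outer iteration, \method sweeps over $n=1,\ldots,N$ and, for each $n$, over the rows $i_n = 1,\ldots,I_n$ of $\mat{A}^{(n)}$; at each step it replaces the current row by the closed-form minimizer given in Theorem~\ref{theorem1}. Because Theorem~\ref{theorem1} establishes that~\eqref{eq:rowupdate} is the exact minimizer of $L$ with respect to that row (with all other parameters held fixed), the old row value is feasible for the same subproblem, so the loss cannot increase at any individual row update. Chaining these inequalities over all rows of all $N$ factor matrices gives $L_{t+1}\le L_t$. Combined with (i), the monotone convergence theorem yields convergence of $\{L_t\}$, and hence of \method.

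The main (and only genuine) obstacle is a well-definedness check that supports the monotonicity step: one must ensure that the row-wise update~\eqref{eq:rowupdate} is actually executable at every step, i.e., that $\mat{B}_{i_n}^{(n)}+\lambda \mathbf{I}_{J_n}$ is invertible. This follows because $\mat{B}_{i_n}^{(n)}=\sum_{\alpha\in\Omega_{i_n}^{(n)}} \delta_{\alpha}^{(n)\mathsf{T}}\delta_{\alpha}^{(n)}$ is a sum of rank-one outer products and is therefore positive semi-definite, so for any $\lambda>0$ the matrix $\mat{B}_{i_n}^{(n)}+\lambda\mathbf{I}_{J_n}$ is positive definite and invertible; this is precisely the property already asserted at the end of Section~\ref{sec:method_factor_matrices}. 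One small caveat worth flagging is that the argument applies cleanly to \method/\TOP (which compute the same update but cache intermediate products), whereas \APP performs a truncation of $\T{G}$ between outer iterations that can transiently raise the loss; the statement of the theorem concerns \method itself, so this does not affect the proof.
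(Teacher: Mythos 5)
Your argument is exactly the paper's: the loss is bounded below by $0$ because both terms of~\eqref{eq:TF_PARTIAL} are non-negative, and each row update is the exact minimizer from Theorem~\ref{theorem1}, so the sequence of loss values is non-increasing and converges by the monotone convergence theorem. Your additional checks --- the positive-definiteness of $\mat{B}_{i_n}^{(n)}+\lambda\mathbf{I}_{J_n}$ guaranteeing the update is well defined, and the caveat that the truncation step of \APP falls outside this argument --- are correct and make the proof more careful than the paper's one-sentence version, without changing the approach.
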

\begin{proof}
	According to Theorem~\ref{theorem1}, the loss function~\eqref{eq:TF_PARTIAL} never increases since every update in \method minimizes it, and \eqref{eq:TF_PARTIAL} is bounded by 0. Thus, \method converges.
\end{proof}

\begin{table}[t!]
	\centering
	\caption{Complexity analysis of \method and other methods with respect to time and memory. The optimal complexities are in bold. \method and its variants exhibit the best time and memory complexity among all methods. Note that memory complexity indicates the space requirement for intermediate data.
	}	
	\begin{tabular}{c|cc}
		\toprule
		\textbf{Algorithm} & \textbf{Time Complexity}  & \textbf{Memory} \\
		& (per iteration) & \textbf{Complexity}  \\
		\midrule
		\MOP   & $O(NIJ^3+N^2|\Omega|J^{N})$ & $\mathbf{O(TJ^2)}$ \\
		\TOP   & $O(NIJ^3+N|\Omega|J^{N})$ & $O(|\Omega|J^{N})$ \\
		\APP   & $\mathbf{O(NIJ^3+N^2|\Omega||\T{G}|)}$ & $O(J^N)$ \\
		\wopt~\cite{filipovic2015tucker}  & $O(N\sum_{k=0}^{k=N}(I^{N-k}J^k))$  & $O(I^{N-1}J)$ \\
		\CSF~\cite{smith2017tucker} & $O(NJ^{N-1}(|\Omega|+J^{2(N-1)}))$ & $O(IJ^{N-1})$ \\
		\SHOT~\cite{Oh:2017:SHOT}  & $O(NJ^{N}+N|\Omega|J^{N})$  & $O(J^{N-1})$ \\
		\bottomrule
	\end{tabular}
	\label{table::analysis}
\end{table}

\subsubsection{Complexity Analysis}
\label{sec:complexity_analysis}

We analyze time and memory complexities of \method and its variants. For simplicity, we assume $I_1=...=I_N=I$ and $J_1=...=J_N=J$. Table~\ref{table::analysis} summarizes the time and memory complexities of \method and other methods.
As expected in Section~\ref{sec:optimization}, \method presents the best memory complexity among all algorithms. While \TOP shows better time complexity than that of \method, \APP exhibits the best time complexity thanks to the reduced number of non-zeros in $\T{G}$.
Note that we calculate time complexities per iteration (lines 3-6 in Algorithm~\ref{alg:partial}),
and we focus on memory complexities of intermediate data, not of all variables.

\begin{theorem}[Time complexity of \MOP] \label{lemma:time_MOP} 	
	The time complexity of \MOP is $O(NIJ^3+N^2|\Omega|J^N)$.
\end{theorem}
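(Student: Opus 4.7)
The plan is to account for the cost of Algorithm~\ref{alg:proposed} in the \MOP branch term by term, using the loop structure at lines~\ref{eq:nth}--\ref{eq:main_rowupdate}. I will split the work done while updating a single factor matrix $\mat{A}^{(n)}$ into four pieces: (i) computing the vectors $\delta_{\alpha}^{(n)}$ for every observable entry $\alpha \in \Omega_{i_n}^{(n)}$; (ii) assembling the Gram-like matrix $\mat{B}_{i_n}^{(n)}$ and the vector $\vect{c}_{i_n:}^{(n)}$ from those $\delta$'s via \eqref{eq:rowB} and \eqref{eq:rowC}; (iii) inverting the $J_n \times J_n$ matrix $\mat{B}_{i_n}^{(n)}+\lambda\mathbf{I}_{J_n}$; and (iv) performing the final vector-matrix multiplication in \eqref{eq:rowupdate}. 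I will then sum over all rows $i_n=1,\ldots,I_n$ of $\mat{A}^{(n)}$ and over all modes $n=1,\ldots,N$. Throughout I set $I_n=I$, $J_n=J$ and exploit the basic identity $\sum_{i_n=1}^{I_n} \lvert\Omega_{i_n}^{(n)}\rvert = \lvert\Omega\rvert$, since every observable entry of $\tensor{X}$ belongs to exactly one $\Omega_{i_n}^{(n)}$ for each fixed mode $n$.

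For piece (i), line~\ref{eq:MOP_delta} computes, for one pair $(\alpha,\beta)\in\Omega_{i_n}^{(n)}\times\T{G}$, the product $\T{G}_\beta\prod_{k\ne n} a^{(k)}_{i_k j_k}$, which requires $O(N)$ scalar multiplications. There are $\lvert\T{G}\rvert=J^N$ choices of $\beta$, so one $\delta_{\alpha}^{(n)}$ costs $O(NJ^N)$; summing over $\alpha \in \Omega_{i_n}^{(n)}$ and then over all rows $i_n$ and modes $n$ yields $O(N^2\lvert\Omega\rvert J^N)$. For piece (ii), each outer-product update to $\mat{B}_{i_n}^{(n)}$ costs $O(J^2)$ and each update to $\vect{c}_{i_n:}^{(n)}$ costs $O(J)$, giving $O(N\lvert\Omega\rvert J^2)$ in total, which is absorbed by (i). For piece (iii), computing the inverse of a $J\times J$ matrix takes $O(J^3)$, and this is done once per row, per mode, contributing $O(NIJ^3)$. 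Piece (iv) is a $1\times J$ by $J\times J$ product costing $O(J^2)$ per row, contributing $O(NIJ^2)$, which is absorbed by (iii).

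Adding the surviving dominant terms gives $O(NIJ^3 + N^2\lvert\Omega\rvert J^N)$, matching the statement of Theorem~\ref{lemma:time_MOP}. The main subtlety, rather than any single calculation, is the bookkeeping step of collapsing the double sum $\sum_{n=1}^{N}\sum_{i_n=1}^{I_n}\lvert\Omega_{i_n}^{(n)}\rvert$ into $N\lvert\Omega\rvert$; once this observation is in place, every other estimate is a routine count of nested-loop iterations. I will also briefly note that initialization of the $\delta$, $\mat{B}$, and $\vect{c}$ buffers is dominated by the terms above, so no additional asymptotic contribution appears.
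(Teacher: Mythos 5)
Your accounting matches the paper's own proof essentially line for line: the same four-way decomposition of the per-row cost ($O(NJ^N)$ per $\delta_\alpha^{(n)}$, $O(J^2)$ for the Gram updates, $O(J^3)$ for the inversion, $O(J^2)$ for the final product), the same collapse of $\sum_n\sum_{i_n}|\Omega_{i_n}^{(n)}|$ to $N|\Omega|$, and the same dominant terms. The only difference is that the paper also charges the per-iteration reconstruction-error computation (line 4 of Algorithm~\ref{alg:partial}) at $O(N|\Omega|J^N)$, which you omit, but that term is absorbed by $O(N^2|\Omega|J^N)$ anyway, so your argument is correct as stated.
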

\begin{proof} \label{proof:time_MOP}
		 Given the $i_n$th row of $\mathbf{A}^{(n)}$ (lines 5-6) in Algorithm~\ref{alg:proposed} ,
		 computing $\delta_{\alpha}^{(n)}(j_n)$ (line 10) takes $O(N|\Omega_{i_n}^{(n)}|J^N)$.
		 Updating $\mat{B}_{i_n}^{(n)}$ and $\vect{c}_{i_n :}^{(n)}$ (line 13) takes $O(|\Omega_{i_n}^{(n)}|J^2)$ since  $\delta_{\alpha}^{(n)}$ is already calculated.
		  Inverting $[\mat{B}_{i_n}^{(n)}+\lambda \mat{I}_{J_n}]$ (line 14) takes $O(J^3)$,
		  and updating a row (line 15) takes $O(J^2)$.
		  Thus, the time complexity of updating the $i_n$th row of $\mathbf{A}^{(n)}$ (lines 7-15) is $O(J^3+N|\Omega_{i_n}^{(n)}|J^N)$. Iterating it for all rows of $\mathbf{A}^{(n)}$ takes $O(IJ^3+N|\Omega|J^N)$.
		  Finally, updating all $\mathbf{A}^{(n)}$ takes $O(NIJ^3+N^2|\Omega|J^N)$.
		  According to \eqref{eq:reconfull}, reconstruction (line 4 in Algorithm~\ref{alg:partial}) takes $O(N|\Omega|J^N)$.
		  Thus, the time complexity of \MOP is $O(NIJ^3+N^2|\Omega|J^N)$.
\end{proof}

\begin{theorem}[Memory complexity of \MOP] \label{lemma:memory_MOP}
	The memory complexity of \MOP is $O(TJ^2)$.
\end{theorem}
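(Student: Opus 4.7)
The plan is to enumerate the intermediate data structures that \MOP maintains while updating a single row of a factor matrix, bound each by $O(J^2)$ under the simplifying assumption $J_1=\cdots=J_N=J$, and then multiply by the thread count $T$ to account for the parallel row updates described in Section~\ref{sec:parallel}.

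First, I would recall from Section~\ref{sec:method_factor_matrices} and the inner loop of Algorithm~\ref{alg:proposed} (lines~7--15) that the row-wise update rule~\eqref{eq:rowupdate} requires exactly three pieces of intermediate data per row: the matrix $\mat{B}_{i_n}^{(n)} \in \mathbb{R}^{J_n \times J_n}$, the vector $\vect{c}_{i_n:}^{(n)} \in \mathbb{R}^{J_n}$, and the vector $\delta_{(i_1,\ldots,i_N)}^{(n)} \in \mathbb{R}^{J_n}$, contributing $O(J^2)$, $O(J)$, and $O(J)$ respectively. Together with the $J \times J$ inverse $[\mat{B}_{i_n}^{(n)}+\lambda \mathbf{I}_{J_n}]^{-1}$ produced on line~14, the per-row intermediate footprint is $O(J^2)$.

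Next, I would invoke the parallelization strategy in Section~\ref{sec:parallel}: the $T$ threads update distinct rows concurrently, and to prevent race conditions each thread must own private copies of $\mat{B}$, $\vect{c}$, $\delta$, and the inverse buffer. Multiplying the per-thread footprint $O(J^2)$ by $T$ yields the claimed $O(TJ^2)$. Since the statement concerns only intermediate data, storage for $\T{X}$, $\T{G}$, $\mathbf{A}^{(n)}$, and the index set $\Omega$ is excluded from the accounting by definition.

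The main subtlety to verify is that $\delta_{\alpha}^{(n)}$ is computed and immediately consumed per observable entry $\alpha \in \Omega_{i_n}^{(n)}$ rather than materialized simultaneously across all such entries---otherwise the per-thread bound would scale with $|\Omega_{i_n}^{(n)}|$. Inspecting lines~8--13 of Algorithm~\ref{alg:proposed} confirms that $\delta_{\alpha}^{(n)}$ is accumulated and then folded into $\mat{B}_{i_n}^{(n)}$ and $\vect{c}_{i_n:}^{(n)}$ within the same iteration of the outer $\alpha$-loop, so only one $\delta$ vector per thread is alive at any time. This loop-invariant observation is the step I expect to require the most care in the write-up; once it is made explicit, the asymptotic bound $O(TJ^2)$ follows by a direct sum over the three per-thread structures.
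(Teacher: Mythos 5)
Your proposal is correct and follows essentially the same route as the paper's proof: enumerate the per-row intermediate data ($\mat{B}_{i_n}^{(n)}$, $\vect{c}_{i_n:}^{(n)}$, $\delta_{\alpha}^{(n)}$, and the inverse), bound them by $O(J^2)$, note that they are released rather than accumulated across rows, and multiply by $T$ for the thread-private copies. Your explicit check that only one $\delta$ vector is alive per thread at a time is a worthwhile refinement of the paper's terser observation that the variables are not accumulated during the iterations.
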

\begin{proof} \label{proof:memory_MOP}
	The intermediate data of \MOP consist of
	two vectors $\delta_{\alpha}^{(n)}$ and $\vect{c}_{i_n :}^{(n)}$ ($\in\mathbb{R}^{J}$)
	, and two matrices $\mat{B}_{i_n}^{(n)}$ and $[\mat{B}_{i_n}^{(n)}+\lambda \mathbf{I}_{J_n}]^{-1}$ ($\in\mathbb{R}^{J \times J}$).
	Memory spaces for those variables are released after updating the $i_n$th row of $\mathbf{A}^{(n)}$.
	Thus, they are not accumulated during the iterations.
	Since each thread has their own intermediate data,
 	the total memory complexity of \MOP is $O(TJ^2)$.
\end{proof}

\begin{theorem}[Time complexity of \TOP] \label{lemma:time_TOP}	
	The time complexity of \TOP is $O(NIJ^3+N|\Omega|J^N)$.
\end{theorem}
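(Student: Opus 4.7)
The plan is to mirror the structure of the proof of Theorem~\ref{lemma:time_MOP} (time complexity of \MOP), since \TOP shares the same algorithmic skeleton in Algorithm~\ref{alg:proposed} and differs only in how the intermediate vector $\delta$ is computed and in the additional cost of maintaining the cache table $Pres$. I will account separately for (i) the one-time precomputation of $Pres$ at the start of the iteration, (ii) the row-by-row update of each factor matrix using $Pres$, (iii) the refresh of $Pres$ after each factor matrix update, and (iv) the reconstruction-error calculation in line 4 of Algorithm~\ref{alg:partial}.

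First, I would analyze the precomputation block (lines 1--4 of Algorithm~\ref{alg:proposed}): for every pair $(\alpha,\beta) \in \Omega \times \T{G}$, line 4 performs $N$ multiplications, giving $O(N|\Omega|J^{N})$ cost, as $|\T{G}|=J^N$. Next, for the row-update block (lines 7--15), the essential change from \MOP is line 12: computing each $\delta_{\alpha}^{(n)}(j_n)$ now costs $O(1)$ by a single lookup-and-division in $Pres$ rather than $O(N)$ multiplications. Thus, for the $i_n$th row of $\mathbf{A}^{(n)}$, assembling $\delta$ over all $(\alpha,\beta)$ pairs costs $O(|\Omega_{i_n}^{(n)}|J^{N})$; the subsequent construction of $\mat{B}_{i_n}^{(n)}$ and $\vect{c}_{i_n:}^{(n)}$ costs $O(|\Omega_{i_n}^{(n)}|J^{2})$, the matrix inversion costs $O(J^{3})$, and the row update costs $O(J^{2})$. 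Summed over all $i_n$, updating $\mathbf{A}^{(n)}$ costs $O(IJ^{3} + |\Omega|J^{N})$, and summed over $n$ this gives $O(NIJ^{3} + N|\Omega|J^{N})$.

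Then I would handle the cache-refresh block (lines 16--19): each of the $|\Omega||\T{G}| = |\Omega|J^{N}$ entries of $Pres$ is updated in $O(1)$ by the division-and-multiplication on line 19, so each factor matrix contributes $O(|\Omega|J^{N})$ and the total across $N$ factor matrices is $O(N|\Omega|J^{N})$. Finally, the reconstruction step (line 4 of Algorithm~\ref{alg:partial}), by Equation~\eqref{eq:reconfull}, takes $O(N|\Omega|J^{N})$, as already noted in the proof of Theorem~\ref{lemma:time_MOP}. Aggregating all four contributions gives $O(NIJ^{3} + N|\Omega|J^{N})$, which absorbs the precomputation, refresh, and reconstruction terms into the second summand.

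The main obstacle, although minor, is simply to be careful that the $O(1)$ lookup on line 12 really replaces the $O(N)$ product on line 10: the cache entry is $\T{G}_{\beta}\prod_{k=1}^{N} a_{i_k j_k}^{(k)}$ and we want $\T{G}_{\beta}\prod_{k \neq n} a_{i_k j_k}^{(k)}$, which is obtained by a single division by $a_{i_n j_n}^{(n)}$ (with a fall-back to the \MOP formula when $a_{i_n j_n}^{(n)}=0$, a constant-cost exception that does not change the asymptotics). Once this equivalence is noted, the remaining steps are a straightforward adaptation of the bookkeeping in Theorem~\ref{lemma:time_MOP}, and the $N$-factor savings on $\delta$ is exactly what drops the $N^{2}|\Omega|J^{N}$ term of \MOP to the $N|\Omega|J^{N}$ term of \TOP.
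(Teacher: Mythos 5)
Your proposal is correct and takes essentially the same route as the paper: the paper likewise observes that the $O(1)$ cached lookup makes the $\delta$ computation cost $O(N|\Omega|J^N)$ over all factor matrices, that precomputing and refreshing $Pres$ also cost $O(N|\Omega|J^N)$, and that every other step is identical to \MOP, yielding $O(NIJ^3+N|\Omega|J^N)$. Your write-up is simply a more detailed bookkeeping of the same argument.
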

\begin{proof}
		In Algorithm~\ref{alg:proposed}, computing $\delta$ (line 12) takes $O(N|\Omega|J^N)$ by the caching method. Precomputing and updating $Pres$ (lines 2-4 and 17-19) also take $O(N|\Omega|J^N)$. Since all the other parts of \TOP are equal to those of \method, the time complexity of \TOP is $O(NIJ^3+N|\Omega|J^N)$.
\end{proof}
\begin{theorem}[Memory complexity of \TOP] \label{lemma:memory_TOP}
	The memory complexity of \TOP is $O(|\Omega|J^N)$.
\end{theorem}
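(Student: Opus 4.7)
The plan is to identify the dominant contribution to the intermediate-data memory footprint of \TOP and show that it subsumes all other contributions. The key observation is that \TOP augments the per-thread intermediate storage of \MOP with a global cache table $Pres \in \mathbb{R}^{|\Omega| \times |\T{G}|}$ that persists across row updates (it is initialized in lines 2--4 of Algorithm~\ref{alg:proposed} and refreshed in lines 17--19 after each factor matrix is updated). Hence the memory complexity decomposes into two pieces: the cache table plus the same per-thread intermediate data analyzed in Theorem~\ref{lemma:memory_MOP}.

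First I would account for $Pres$. By its declared shape it holds exactly $|\Omega| \cdot |\T{G}|$ scalars, and under the simplifying assumption $J_1 = \cdots = J_N = J$ we have $|\T{G}| = J^N$, so the cache occupies $O(|\Omega| J^N)$ memory. Next I would reuse Theorem~\ref{lemma:memory_MOP}: apart from $Pres$, \TOP manipulates the same intermediate objects as \MOP during a row update, namely $\delta_{\alpha}^{(n)}$, $\vect{c}_{i_n:}^{(n)} \in \mathbb{R}^J$ and $\mat{B}_{i_n}^{(n)}$, $[\mat{B}_{i_n}^{(n)}+\lambda \mathbf{I}_{J_n}]^{-1} \in \mathbb{R}^{J \times J}$, each replicated once per thread, contributing $O(TJ^2)$. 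Summing the two pieces yields $O(|\Omega| J^N + TJ^2)$, and since $|\Omega| \geq 1$ and typically $T \ll |\Omega| J^{N-2}$, the first term dominates and the total collapses to $O(|\Omega| J^N)$.

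The only delicate point — and the one I would state carefully — is justifying that $Pres$ is truly ``intermediate'' in the sense of Definition~5 (i.e.\ it is not part of the stored input $\T{X}$, the core $\T{G}$, or the factor matrices $\mathbf{A}^{(n)}$), so it must be counted, and simultaneously that its contribution dominates the $O(TJ^2)$ per-thread scratch space. This amounts to a one-line comparison $TJ^2 = O(|\Omega|J^N)$ under the mild hypothesis that $T = O(|\Omega|J^{N-2})$, which holds in every practical regime since $|\Omega| \geq T$ and $N \geq 2$. I do not anticipate any other obstacle; once the cache table is correctly sized, the rest is bookkeeping inherited from the proof of Theorem~\ref{lemma:memory_MOP}.
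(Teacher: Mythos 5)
Your proposal is correct and follows essentially the same route as the paper: the paper likewise observes that the cache table $Pres$ occupies $O(|\Omega|J^N)$ memory and dominates the $O(TJ^2)$ per-thread intermediate data inherited from Theorem~\ref{lemma:memory_MOP}. Your only addition is making the domination condition $TJ^2 = O(|\Omega|J^N)$ explicit, which the paper leaves implicit.
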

\begin{proof}
	The cache table $Pres$ requires $O(|\Omega|J^N)$ memory space, which is much larger than that of other intermediate data (see Theorem~\ref{lemma:memory_MOP}).  Thus, the memory complexity of \TOP is $O(|\Omega|J^N)$.
\end{proof}

\begin{theorem}[Time complexity of \APP] \label{lemma:time_APP}	
	The time complexity of \APP is $O(NIJ^3+N^2|\Omega||\T{G}|)$.
\end{theorem}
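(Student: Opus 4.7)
The plan is to mirror the structure of the proof of Theorem~\ref{lemma:time_MOP} for \MOP, observing that \APP differs only in the effective size of the core tensor: after Algorithm~\ref{alg:APP} truncates the ``noisy'' entries, the inner loop of Algorithm~\ref{alg:proposed} over $\beta \in \T{G}$ iterates $|\T{G}|$ times instead of $J^N$ times. Since the complexity derivation of \MOP was dominated by terms proportional to $J^N$ coming exactly from this inner loop, the substitution $J^N \mapsto |\T{G}|$ should propagate through essentially unchanged.

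More concretely, I would first re-examine the per-row cost: computing $\delta_{\alpha}^{(n)}(j_n)$ on line 10 of Algorithm~\ref{alg:proposed} takes $O(N|\Omega_{i_n}^{(n)}||\T{G}|)$ under \APP (the $N$ coming from the product $\prod_{k \neq n} a^{(k)}_{i_k j_k}$ and the $|\T{G}|$ from the restricted core-tensor loop). The remaining per-row work (assembling $\mat{B}_{i_n}^{(n)}, \vect{c}_{i_n:}^{(n)}$ on line 13, the $J_n \times J_n$ inversion on line 14, and the row update on line 15) is unaffected by truncation and contributes $O(|\Omega_{i_n}^{(n)}|J^2 + J^3)$ as in Theorem~\ref{lemma:time_MOP}. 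Summing over $i_n = 1, \dots, I_n$ and then over $n = 1, \dots, N$ gives $O(NIJ^3 + N^2|\Omega||\T{G}|)$ for the factor matrix updates (line 3 of Algorithm~\ref{alg:partial}). The reconstruction error (line 4 of Algorithm~\ref{alg:partial}) also uses \eqref{eq:reconfull} with the inner sum restricted to non-zeros of $\T{G}$, costing $O(N|\Omega||\T{G}|)$, which is absorbed.

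The step I expect to need the most care is accounting for the overhead of Algorithm~\ref{alg:APP} itself, since a naive reading of \eqref{eq:recon_partial} suggests that computing $\T{R}(\beta)$ for each $\beta$ costs $O(N|\Omega||\T{G}|)$, giving $O(N|\Omega||\T{G}|^2)$ over all $\beta$, which would violate the claimed bound. The fix is to precompute, once per iteration and for each observable entry $\alpha \in \Omega$, the full sum $\sum_{\gamma \in \T{G}} \tensor{G}_{\gamma}\prod_n a^{(n)}_{i_n j_n}$ at cost $O(N|\Omega||\T{G}|)$; then for every $\beta$ the quantity $\sum_{\gamma \neq \beta}$ is obtained in $O(N)$ by subtracting one term, so evaluating a single $\T{R}(\beta)$ across all $\alpha$ costs $O(N|\Omega|)$ and evaluating all $|\T{G}|$ of them costs $O(N|\Omega||\T{G}|)$. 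Sorting $\T{R}(\beta)$ values (line 3 of Algorithm~\ref{alg:APP}) costs $O(|\T{G}|\log|\T{G}|)$ and truncation (line 4) costs $O(p|\T{G}|)$; both are dominated by the preceding terms.

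Combining the factor-matrix update cost, the reconstruction cost, and the Algorithm~\ref{alg:APP} overhead yields the claimed total $O(NIJ^3 + N^2|\Omega||\T{G}|)$ per iteration. I would close by noting that, since truncation shrinks $|\T{G}|$ monotonically across iterations, this per-iteration bound also predicts the observed speedup relative to \MOP, whose analogous bound scales with $J^N \geq |\T{G}|$.
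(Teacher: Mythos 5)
Your proof is correct and takes the route the paper intends: the paper itself only writes ``refer to the supplementary material'' for this theorem, but your derivation is the direct analogue of the paper's own proof of Theorem~\ref{lemma:time_MOP}, replacing the core-tensor loop count $J^N$ by the number $|\T{G}|$ of surviving entries. Your one genuinely non-routine step --- amortizing the $\T{R}(\beta)$ computation in Algorithm~\ref{alg:APP} by precomputing $\sum_{\gamma\in\T{G}}\tensor{G}_{\gamma}\prod_n a^{(n)}_{i_n j_n}$ once per $\alpha$ and recovering each $\sum_{\gamma\neq\beta}$ by subtracting a single term --- is exactly what is needed to keep the truncation overhead at $O(N|\Omega||\T{G}|)$ rather than $O(N|\Omega||\T{G}|^2)$, and without it the stated bound would not follow, so it is right that you made it explicit.
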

\begin{proof}
	Refer to the supplementary material~\cite{supple}.
\end{proof}
\begin{theorem}[Memory complexity of \APP] \label{lemma:memory_APP}
The memory complexity of \APP is $O(J^N)$.
\end{theorem}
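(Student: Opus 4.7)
The plan is to decompose the intermediate-data footprint of \APP into the parts it inherits from \MOP and the parts specific to the truncation step in Algorithm~\ref{alg:APP}, then show that the latter dominates. Since \APP drives the row-wise update procedure in exactly the way \MOP does (lines 7--15 of Algorithm~\ref{alg:proposed}), the per-thread working set still consists of the vectors $\delta_{\alpha}^{(n)}, \vect{c}_{i_n:}^{(n)}\in\mathbb{R}^{J}$ and the matrices $\mat{B}_{i_n}^{(n)}, [\mat{B}_{i_n}^{(n)}+\lambda\mathbf{I}_{J_n}]^{-1}\in\mathbb{R}^{J\times J}$, so by Theorem~\ref{lemma:memory_MOP} this portion contributes $O(TJ^2)$ and is released after each row update rather than accumulated.

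Next I would account for the extra storage demanded by Algorithm~\ref{alg:APP}. To rank entries by ``noisiness,'' \APP evaluates $\T{R}(\beta)$ from \eqref{eq:recon_partial} for every $\beta\in\T{G}$ and stores these values together with the associated multi-indices so that the descending sort and top-$p|\T{G}|$ truncation can be performed. Both the $\T{R}(\beta)$ array and the accompanying index array scale with the number of entries of the core tensor, which is at most $|\T{G}|=\prod_{n=1}^{N}J_n=J^N$, and therefore require $O(J^N)$ memory. Crucially, these buffers must persist across the inner loop over $\beta$ so that the sort can see every entry at once, which is why they cannot be amortized away as the per-row scratch data were.

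Summing the two contributions yields an overall intermediate-data footprint of $O(TJ^2+J^N)$. Under the standing assumption of the analysis that $N\geq 3$ and the number of threads $T$ is not pathologically large (so that $TJ^2=o(J^N)$), the $J^N$ term dominates, giving the claimed $O(J^N)$ bound.

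The only delicate point I anticipate is justifying why the sort's auxiliary storage does not blow past $O(J^N)$; I would handle this by noting that an in-place or $O(|\T{G}|)$-auxiliary comparison sort suffices, since only the ranking of $\T{R}(\beta)$ values (not any larger derived structure) is needed to identify which entries to zero out in $\T{G}$. With that observation, no buffer beyond size $O(J^N)$ is ever materialized, and the memory bound holds throughout every iteration of the outer loop in Algorithm~\ref{alg:partial}.
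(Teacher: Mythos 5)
Your proof is correct, and since the paper's own ``proof'' of this theorem is just a pointer to the supplementary material, the argument you give is the natural one implied by the paper's framework: \APP runs the same row-wise updates as \MOP (no $Pres$ table), so the per-thread scratch is the $O(TJ^2)$ of Theorem~\ref{lemma:memory_MOP}, and the only new intermediate data are the $\T{R}(\beta)$ values and their indices over all $\beta\in\T{G}$, which cost $O(|\T{G}|)=O(J^N)$ and are correctly classified as intermediate data because the paper's definition excludes only the storage of $\T{X}$, $\T{G}$, and the $\mathbf{A}^{(n)}$ themselves. The one point worth being explicit about is that the honest total is $O(TJ^2+J^N)$; the clean $O(J^N)$ stated in Table~\ref{table::analysis} (and in the theorem) implicitly assumes $TJ^2=O(J^N)$, which holds for $N\geq 3$ and any realistic thread count --- you flag this assumption, which is more careful than the paper's own presentation.
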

\begin{proof}
	Refer to the supplementary material~\cite{supple}.
\end{proof}

	\section{Experiments}
	\label{sec:experiment}

We present experimental results to answer the following questions.

\begin{enumerate}
	\item{\textbf{Data Scalability (Section~\ref{sec:exp:data_scalability}).}
		How well do \method and competitors scale up with respect to the following aspects of a given tensor: 1) the order, 2) the dimensionality, 3) the number of observable entries, and 4) the rank?
	}
	\item{\textbf{Effectiveness of \TOP and \APP  (Section~\ref{sec:exp:optimization}).}
		How successfully do \TOP and \APP suggest the trade-offs between time-memory and time-accuracy, respectively?
	}
	\item{\textbf{Effectiveness of Parallelization (Section~\ref{sec:exp:thread_scalability}).}
		How well does \method scale up with respect to the number of threads used for parallelization?
		How much does the dynamic scheduling accelerate the update process?
	}
	\item{\textbf{Real-World Accuracy (Section~\ref{sec:exp:accuracy}).}
			How accurately do \method and other methods factorize real-world tensors and predict their missing entries?
	}
\end{enumerate}

We describe the datasets and experimental settings in Section~\ref{sec:exp:settings},
and answer the questions in \Cref{sec:exp:data_scalability,sec:exp:optimization,sec:exp:thread_scalability,sec:exp:accuracy}.

\begin{savenotes}
	\begin{table}[htbp!]
		\small
		\centering
		\caption{Summary of real-world and synthetic tensors used for experiments. M: million, K: thousand.}
		\centering
		\begin{tabular}{ r | r | r | r | r}
			\toprule
			\textbf{Name} & \textbf{Order} & \textbf{Dimensionality} & \textbf{$|\Omega|$} & \textbf{Rank} \\
			\midrule
			Yahoo-music & 4 & (1M, 625K, 133, 24) &  252M & 10 \\
			MovieLens & 4 & (138K, 27K, 21, 24)  & 20M & 10\\
			Video (Wave) & 4 & (112,160,3,32) & 160K & 3 \\
			Image (Lena) & 3 & (256,256,3) & 20K & 3 \\
			Synthetic &  3$\thicksim$10 & 100$\thicksim$10M & $\thicksim$100M & 3$\thicksim$11\\
		\bottomrule
		\end{tabular}	
		\label{tab:dataset}
	\end{table}
\end{savenotes}

\subsection{Experimental Settings}
\label{sec:exp:settings}
\subsubsection{Datasets}
\label{sec:datasets}
We use both real-world and synthetic tensors to evaluate \method and competitors.
Table~\ref{tab:dataset} summarizes the tensors we used in experiments, which are available at \textbf{\url{https://datalab.snu.ac.kr/ptucker/}}.
For real-world tensors, we use Yahoo-music\footnote{\url{https://webscope.sandbox.yahoo.com/catalog.php?datatype=r}}, MovieLens\footnote{\url{https://grouplens.org/datasets/movielens/}}, Sea-wave video, and `Lena' image.
Yahoo-music is music rating data which consist of (user, music, year-month, hour, rating).
MovieLens is movie rating data which consist of (user, movie, year, hour, rating).
Sea-wave video and `Lena' image are 10\%-sampled tensors from original data.
Note that we normalize all values of real-world tensors to numbers between 0 to 1. We also use 90\% of observed entries as training data and the rest of them as test data for measuring the accuracy of \method and competitors.
For synthetic tensors, we create random tensors, which we describe in Section~\ref{sec:exp:data_scalability}.

\subsubsection{Competitors}
\label{sec:competitors}
We compare \method and its variants with three state-of-the-art Tucker factorization (TF) methods. Descriptions of all methods are given as follows:
\begin{itemize}
	\item{\textbf{\method (default):}
		the proposed method which minimizes intermediate data by a row-wise update rule, used by default throughout all experiments.
	}
	\item{\textbf{\TOP:}
		the time-optimized variant of \method, which caches intermediate multiplications to update factor matrices efficiently.
	}
	\item{\textbf{\APP:}
		the time-optimized variant of \method, which shows a trade-off between time and accuracy by truncating ``noisy" entries of a core tensor.
	}
	\item{\textbf{\wopt~\cite{filipovic2015tucker}:}
		the accuracy-focused TF method utilizing a nonlinear conjugate gradient algorithm for updating factor matrices and a core tensor.
	}
	\item{\textbf{\CSF~\cite{smith2017tucker}:}
		the speed-focused TF algorithm which accelerates a tensor-times-matrix chain (TTMc) by a compressed sparse fiber (CSF) structure.
	}		
	\item{\textbf{\SHOT~\cite{Oh:2017:SHOT}:}
		the TF method designed for large-scale tensors, which avoids \textit{intermediate data explosion}~\cite{kang2012gigatensor} by \textit{on-the-fly} computation.
	}
\end{itemize}
Notice that other Tucker methods (e.g.,~\cite{kaya, Liu2013}) are excluded since they present similar or limited scalability compared to that of competitors mentioned above.


\begin{figure*}[t!]
	\centering
	\vspace{-3mm}
	\includegraphics[width=0.5\linewidth] {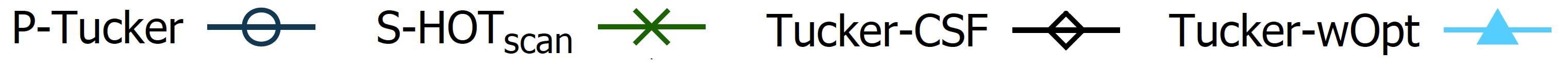} \\
	\hspace{-2mm}
	\begin{subfigure}[t]{0.23\textwidth}
		\includegraphics[width=4.6cm]{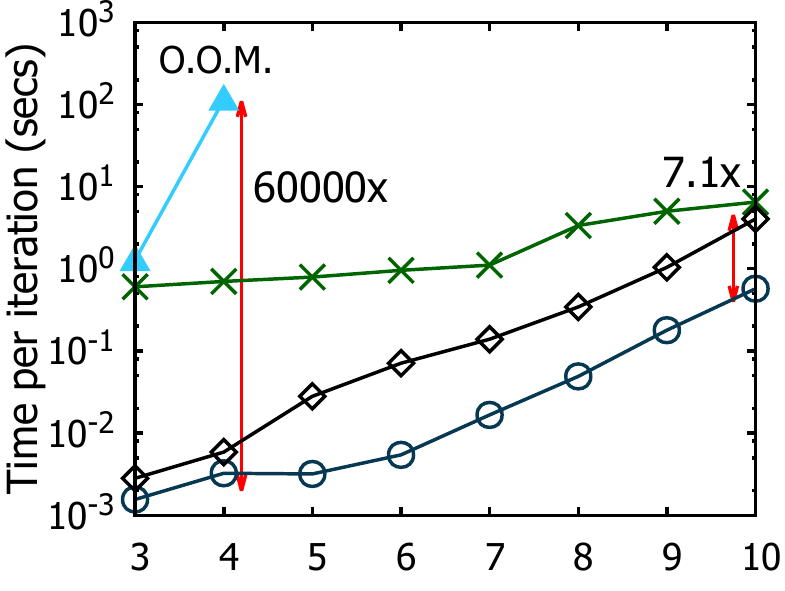}
		\captionsetup{justification=centering}
		\caption{Tensor order.}
		\label{fig:order:full}
	\end{subfigure}
	\hspace{3mm}
	\begin{subfigure}[t]{0.23\textwidth}
		\includegraphics[width=4.6cm]{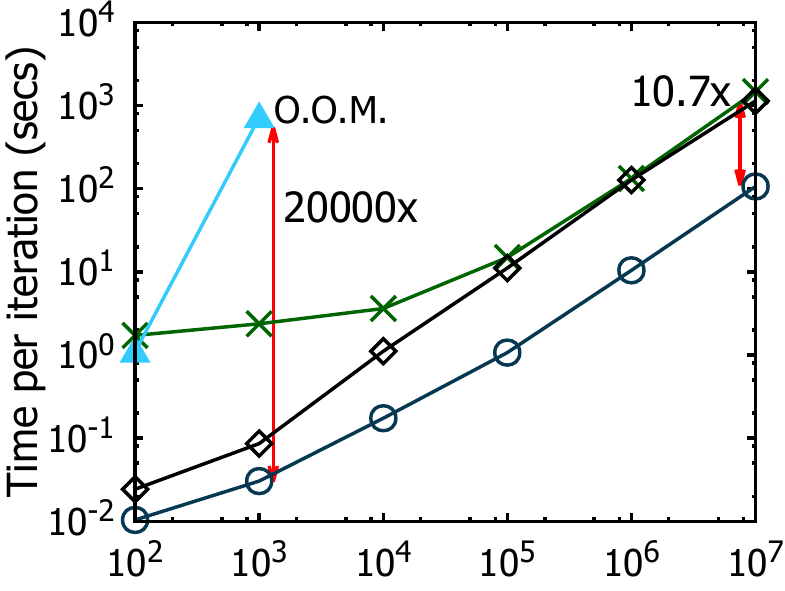}
		\captionsetup{justification=centering}
		\caption{Tensor dimensionality.}
		\label{fig:scalability:dimensionality}
	\end{subfigure}
	\hspace{3mm}
	\begin{subfigure}[t]{0.24\textwidth}
		\centering
		\includegraphics[width=4.6cm]{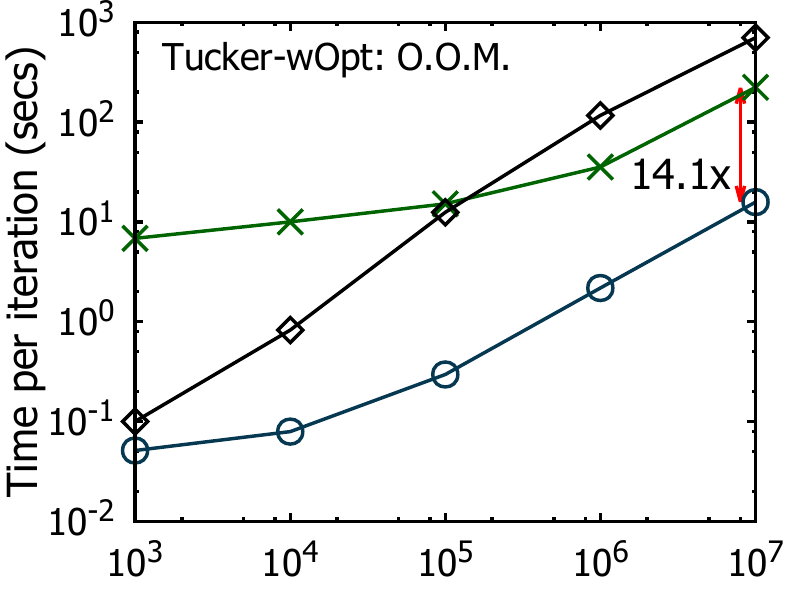}
		\captionsetup{justification=centering}
		\caption{Number of observable entries.}
		\label{fig:scalability:nonzeros}
	\end{subfigure}
	\hspace{1mm}
	\begin{subfigure}[t]{0.23\textwidth}
		\centering
		\includegraphics[width=4.6cm]{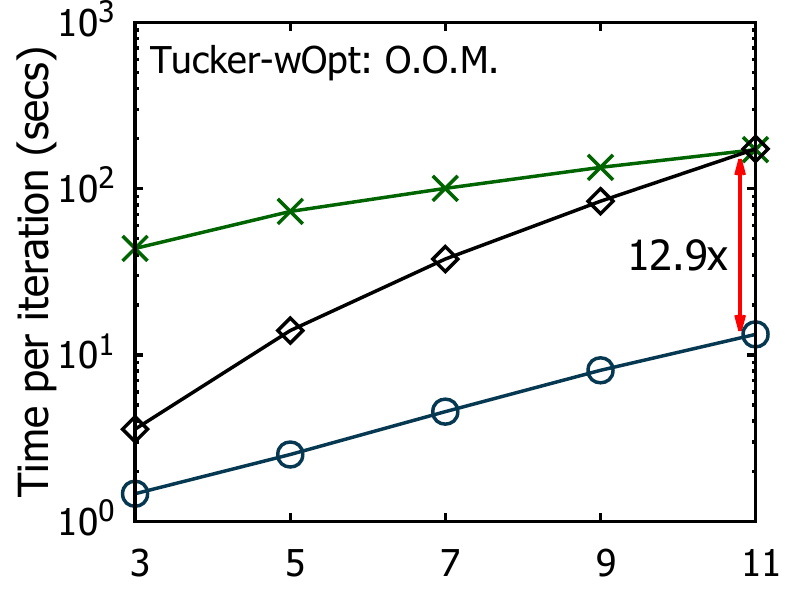}
		\captionsetup{justification=centering}
		\caption{Tensor rank.}
		\label{fig:scalability:rank}
	\end{subfigure}
	\caption{
		The scalability of \method and competitors for large-scale synthetic tensors.
		O.O.M.: out of memory.
		\method exhibits 7.1-14.1x speed up compared to the state-of-the-art with respect to all aspects.
		Notice that \wopt presents O.O.M. in most cases due to their limited scalability, and \method indicates the default memory-optimized version, not \TOP or \APP.
	}
	\label{fig:scalability}
\end{figure*}

\subsubsection{Environment}
\method is implemented in C with \omp and \arma libraries utilized for parallelization and linear algebra operations.
From a practical viewpoint, \method does not automatically choose which optimizations to be used. Hence, users ought to select a method from \method and its variations in advance.
For competitors, we use the original implementations provided by the authors (\SHOT\footnote{\url{https://github.com/jinohoh/WSDM17_shot}}, \CSF\footnote{\url{https://github.com/ShadenSmith/splatt}}, and \wopt\footnote{\url{http://www.lair.irb.hr/ikopriva/Data/PhD_Students/mfilipovic/}}).
We run experiments on a single machine with 20 cores/20 threads, equipped with an Intel Xeon E5-2630 v4 2.2GHz CPU and 512GB RAM.
The default values for \method parameters $\lambda$ and $T$ are set to 0.01 and 20, respectively; for \APP, the truncation rate per iteration is set to 0.2; for \CSF, we set the number of CSF allocations to 1 and choose a LAPACK SVD routine. We set the maximum running time per iteration to 2 hours and the maximum number of iterations to 20. In reporting running times, we use average elapsed time per iteration instead of total running time in order to confirm the theoretical complexities (see Table~\ref{table::analysis}), which are analyzed per iteration.

\subsection{Data Scalability}
\label{sec:exp:data_scalability}

We evaluate the data scalability of \method and other methods using both synthetic and real-world tensors.

\subsubsection{Synthetic Data}

We generate random tensors of size $I_1\!=\!I_2\!=\!...\!=\!I_N$ with real-valued entries between 0 and 1,
varying the following aspects: tensor order, tensor dimensionality, the number of observable entries, and tensor rank.
We assume that the core tensor $\T{G}$ is of size $J_1\!=\!J_2\!=\!...\!=\!J_N$.

\textbf{Order.}
We increase the order $N$ of an input tensor from 3 to 10, while fixing $I_n\!=\!10^2$, $|\Omega|\!=\!10^3$, and $J_n\!=\!3$.
As shown in Figure~\ref{fig:order:full}, \method exhibits the fastest running time with respect to the order.
Although \SHOT and \CSF can decompose up to the highest-order tensor, they run 11$\times$ and 7.1$\times$ slower than \method, respectively.
\wopt runs 60000$\times$ (when $N=4$) slower than \method and shows O.O.M. (out of memory error) when $N\geq5$. The enormous speed-gap between \method and \wopt is explained by their time complexities. The speed of \wopt mainly depends on the dimensionality term $I^N$, while \method relies on the rank term $J^N$ where $I >> J$.

\textbf{Dimensionality.}
We increase the dimensionality $I_n$ of an input tensor from $10^2$ to $10^7$,
while setting $N\!=\!3$, $|\Omega|\!=\!10 \times I_n$, and $J_n\!=\!10$.
As shown in Figure~\ref{fig:scalability:dimensionality},
\method consistently runs faster than other methods across all dimensionality.
\wopt runs $20000 \times$ (when $I_n=10^3$) slower than \method and presents O.O.M. when $I_n\geq 10^4$. The speed-gap between \method and \wopt is also described in a similar way to that of the order case.
Though \SHOT and \CSF scale up to the largest tensor as well, they run  13.8$\times$ and 10.7$\times$ slower than \method, respectively.

\textbf{Number of Observable Entries.}
We increase the number $|\Omega|$ of observable entries from $10^3$ to $10^7$, while fixing $N\!=\!3$, $I_n\!=\!10^7$, and $J_n\!=\!10$.
As shown in Figure~\ref{fig:scalability:nonzeros}, \method, \SHOT, and \CSF scale up to the largest tensor,
while \wopt shows O.O.M. for all tensors.
\method presents the fastest factorization speed across all $|\Omega|$ and runs 14.1$\times$ and 44.3$\times$ faster than \SHOT and \CSF on the largest tensor with $|\Omega|=10^7$, respectively. Note that \method scales near linearly with respect to the number of observable entries.

\textbf{Rank.}
We increase the rank $J_n$ from 3 to 11 with an increment of 2, while fixing $N\!=\!3$, $I_n\!=\!10^6$, and $|\Omega|\!=\!10^7$.
As shown in Figure~\ref{fig:scalability:rank}, \method, \SHOT, and \CSF successfully factorize input tensors for all ranks.
\method is the fastest in all cases; in particular, \method runs 12.9$\times$ and 13.0$\times$ faster than \SHOT and \CSF when $J_n=11$, respectively.
\wopt causes O.O.M. errors for all ranks.

\begin{figure}[t!]
	\centering
	\includegraphics[width=0.49\textwidth]{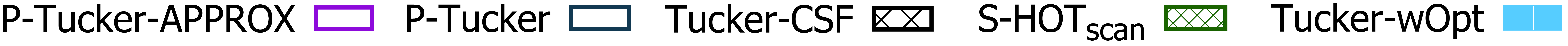} \\
	\vspace{1mm}
	\begin{subfigure}[t]{0.24\textwidth}
		\centering
		\includegraphics[width=4.5cm]{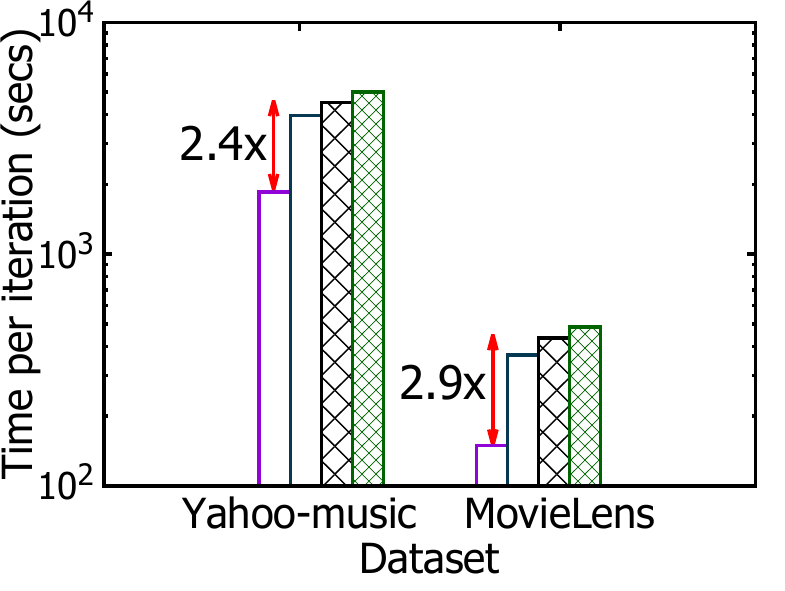}
		\captionsetup{justification=centering}
		\label{fig:real_world_large}
	\end{subfigure}
	\begin{subfigure}[t]{0.24\textwidth}
		\centering
		\includegraphics[width=4.5cm]{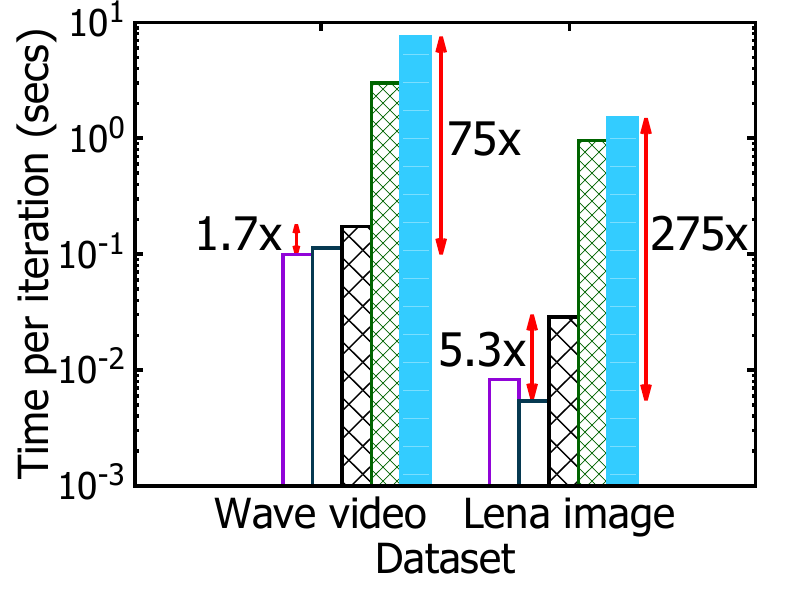}
		\captionsetup{justification=centering}
		\label{fig:real_world_small}
	\end{subfigure}
	\vspace{-3mm}
	\caption{
		The scalability of \method and competitors on real-world tensors. \method and \APP show the fastest running time across all datasets. An empty bar indicates that the corresponding method shows O.O.M. while factorizing the dataset.
	}
	\label{fig:real_world_time}
\end{figure}

\subsubsection{Real-world Data}
We measure the average running time per iteration of \method and other methods on the real-world datasets introduced in Section~\ref{sec:datasets}. Due to the large scale of real-world tensors, \wopt shows O.O.M. for two of them, which are set to blanks as shown in Figure~\ref{fig:real_world_time}.
Notice that \method and \APP succeed in decomposing the large-scale real-world tensors and run $1.7-275\times$ faster than competitors.

\subsection{\TOP and \APP}
\label{sec:exp:optimization}
To investigate the effectiveness of \TOP, we vary the tensor order $N$ from $6$ to $10$, while fixing $I_n\!=\!10^2$, $|\Omega|\!=\!10^3$, and $J_n\!=\!3$.
Figure~\ref{fig:order} shows the running time and memory usage of \method and \TOP.
\method uses $29.5\times$ less memory than \TOP for the largest order $N=10$.
However, \TOP runs up to $1.7\times$ faster than \method, where the gap between the running times grows as tensor order $N$ grows since running times of \TOP and \MOP are mainly proportional to $N$ and $N^2$, respectively.

In the case of \APP, we measure per-iteration time and full running time until convergence.
Figures~\ref{fig:APP_time} and ~\ref{fig:APP_recon} illustrate the effectiveness of \APP for the MovieLens dataset ($J_n\!=\!5$). \APP gets faster than \method when iteration $\geq 3$ and  converges $1.7 \times$ earlier than \method. Moreover, the reconstruction error of \APP is almost the same as that of \method.
Note that one iteration corresponds to lines 3-6 in Algorithm~\ref{alg:partial}.

\begin{figure}[t!]
	\centering
	\vspace{-3mm}
	\hspace{-4mm}
	\begin{subfigure}[t]{0.24\textwidth}
		\centering
		\includegraphics[width=4.6cm]{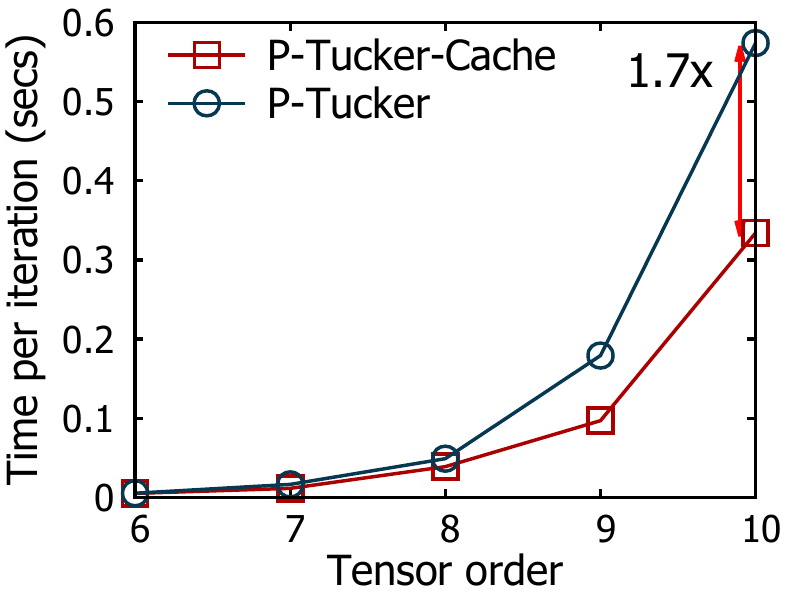}
		\captionsetup{justification=centering}
		\caption{Running time of \method \\ and \TOP. }
		\label{fig:order:time}
	\end{subfigure}
	\hspace{2mm}
	\begin{subfigure}[t]{0.24\textwidth}
		\centering
		\includegraphics[width=4.6cm]{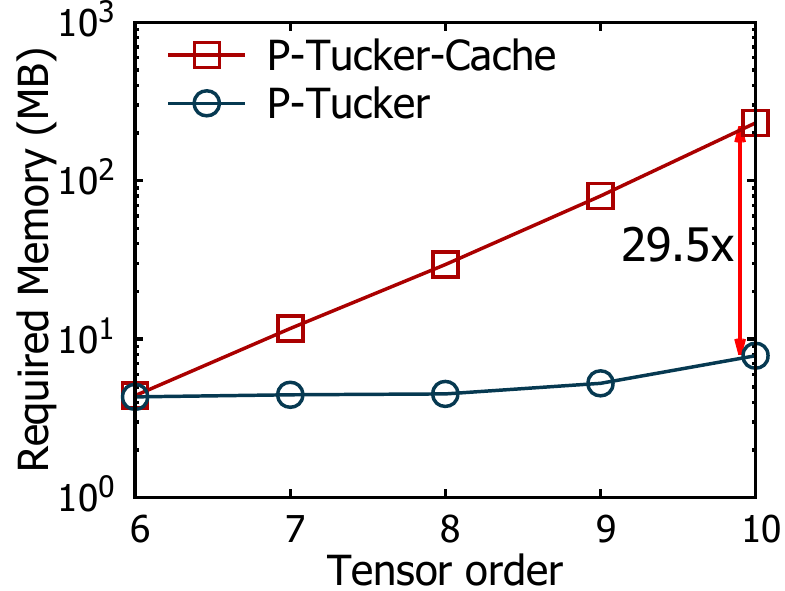}
		\captionsetup{justification=centering}
		\caption{Memory usage of \method \\ and \TOP. }
		\label{fig:order:memory}
	\end{subfigure}
	\caption{
		Comparison results of \method and \TOP. \TOP runs up to 1.7$\times$ faster than \MOP for higher-order tensors, while \MOP decomposes the highest-order tensor with 29.5$\times$ less memory than \TOP.
	}
	\label{fig:order}
\end{figure}

\begin{figure}[t!]
	\centering
	\hspace{-6mm}
	\begin{subfigure}[t]{0.23\textwidth}
		\includegraphics[width=4.6cm]{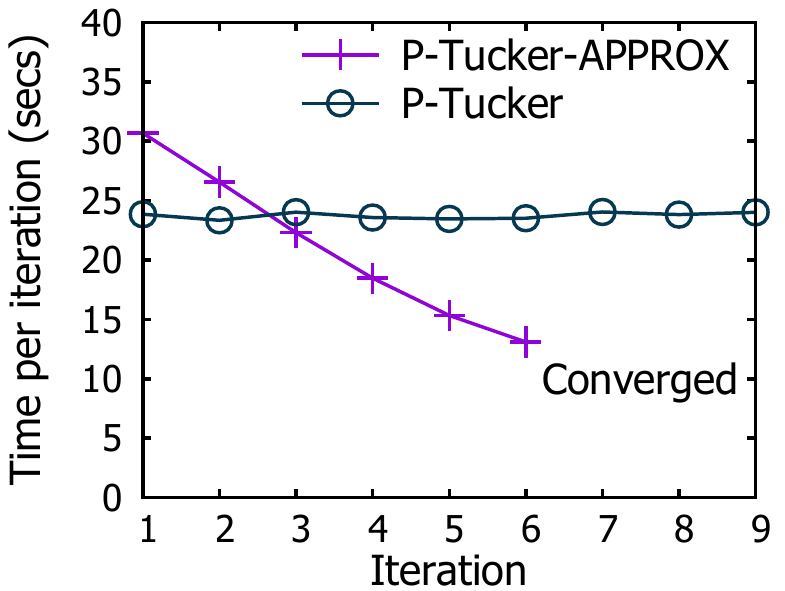}
		\captionsetup{justification=centering}
		\caption{Per-iteration running time of \method \\ and \APP. }
		\label{fig:APP_time}
	\end{subfigure}
	\hspace{3mm}
	\begin{subfigure}[t]{0.23\textwidth}
		\includegraphics[width=4.6cm]{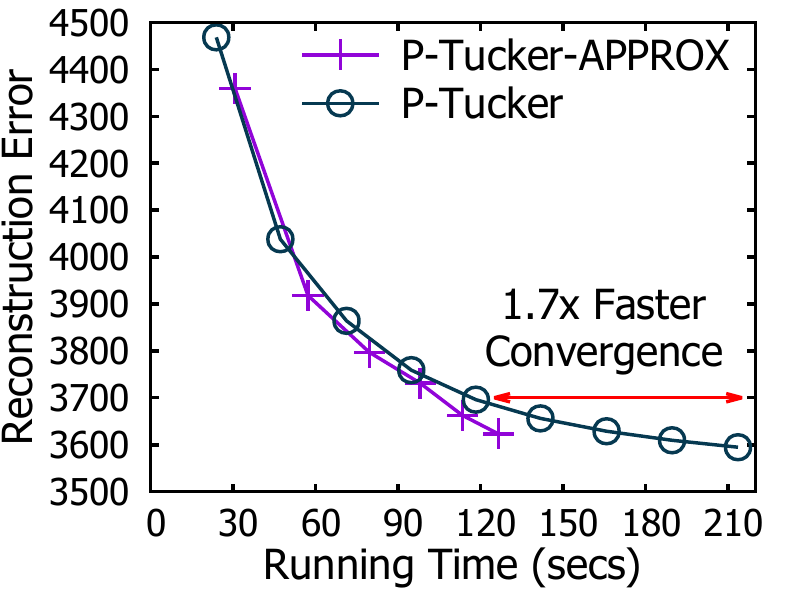}
		\captionsetup{justification=centering}
		\caption{Accuracy of \method and \APP until convergence. }
		\label{fig:APP_recon}
	\end{subfigure}
	\caption{Comparison results of \method and \APP. \APP gets faster at every iteration and eventually runs quicker than \method (when iteration~$\geq 3$). Furthermore, \APP converges $1.7 \times$ faster than \method with almost the same accuracy.}
	\label{fig:APP_exp}
\end{figure}

\subsection{Effectiveness of Parallelization}
\label{sec:exp:thread_scalability}
We measure the speed-ups ($Time_{1}/Time_{T}$ where $Time_{T}$ is the running time using $T$ threads) and memory requirements of \method by increasing the number of threads from 1 to 20, while fixing $N\!=\!3$, $I_n\!=\!10^6$, and $|\Omega|\!=\!10^7$.
Figure~\ref{fig:thread_scalability} shows near-linear speed up and memory requirements of \method regarding the number of threads. The linear speed-up implies that our parallelization works successfully, and the linearity of memory usage demonstrates that our theoretical memory complexity of \method matches the empirical result well. In addition, in order to verify the speed-up of dynamic scheduling, we compare \method with a naive parallelization which does not consider workload distributions. For the MovieLens dataset ($J_n = 10$), the running time of \method (367.5s) is $1.5 \times$ faster than that of the naive approach (552.7s), which demonstrates the effectiveness of dynamic scheduling.

\begin{figure}[t!]
	\centering
	\vspace{-3mm}
	\hspace{-2mm}
	\begin{subfigure}[t]{0.23\textwidth}
		\includegraphics[width=4.6cm]{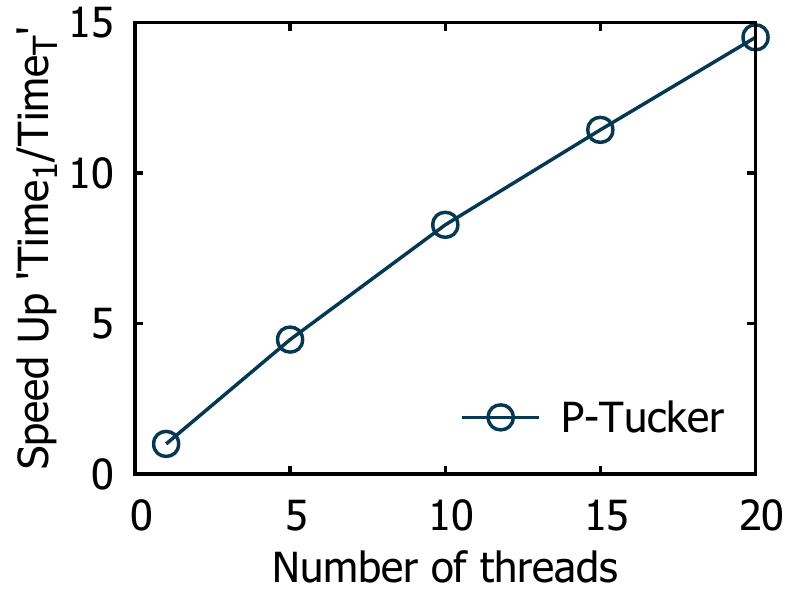}
		\label{fig:thread_time}
	\end{subfigure}
	\hspace{3mm}
	\begin{subfigure}[t]{0.23\textwidth}
		\includegraphics[width=4.6cm]{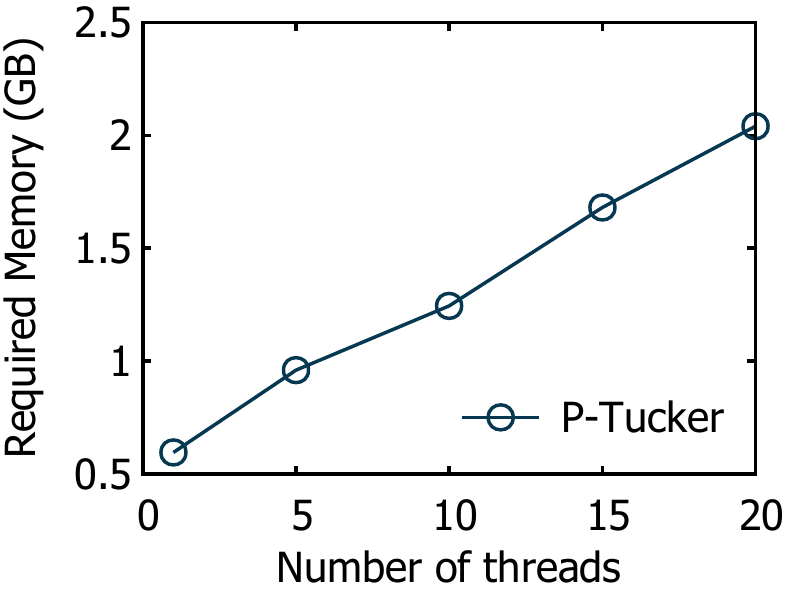}
		\label{fig:thread_memory}
	\end{subfigure}
	\vspace{-2mm}
	\caption{The parallelization scalability of \method. Notice that the speed of \method increases linearly in terms of the number of threads, and the memory requirements of \method also scale near linearly with regard to the number of threads.}
	\label{fig:thread_scalability}
\end{figure}

\subsection{Real-World Accuracy }
\label{sec:exp:accuracy}
We evaluate the accuracy of \method and other methods on the real-world tensors.
The evaluation metrics are reconstruction error and test root mean square error (RMSE); the former describes how precisely a method factorizes a given tensor, and the latter indicates how accurately a method estimates missing entries of a tensor, which is widely used by recommender systems.
As shown in Figure~\ref{fig:real_world_accuracy},
\method factorizes the tensors with 1.4-4.8$\times$ less reconstruction error and predicts missing entries of given tensors with 1.4-4.3$\times$ less test RMSE compared to the state-of-the-art.
In particular, \method exhibits 1.4-2.6$\times$ higher accuracy than that of \wopt, which also focuses on observed entries during factorizations.
In Figure~\ref{fig:real_world_accuracy}, we present \SHOT and \CSF with the same bar since they have similar accuracy, and the methods have low accuracies as they try to estimate missing entries as zeros.
An omitted bar indicates that the corresponding method shows O.O.M. while decomposing the dataset.

\begin{figure}[h!]
	\centering
	\vspace{3mm}
	\includegraphics[width=0.5\textwidth]{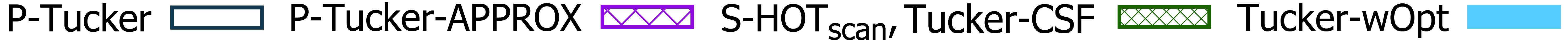} \\
	\hspace{-1mm}
	\begin{subfigure}[t]{0.23\textwidth}
		\includegraphics[width=4.6cm]{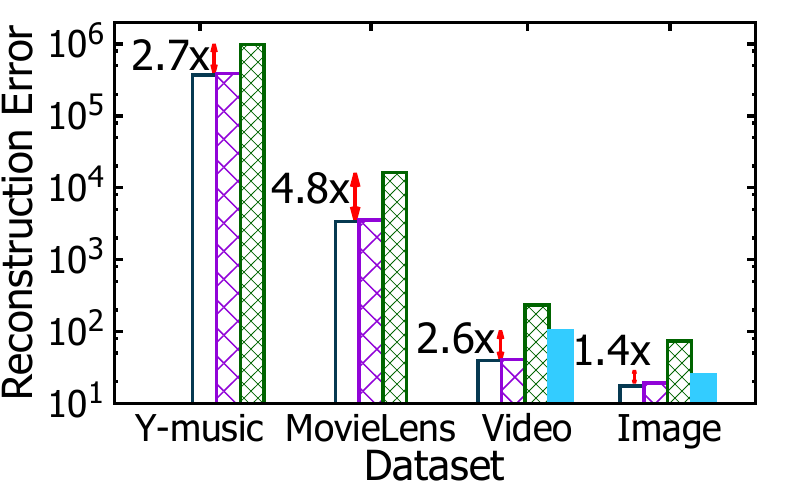}
		\label{fig:real_world_recon}
	\end{subfigure}
	\hspace{1mm}
	\begin{subfigure}[t]{0.23\textwidth}
		\includegraphics[width=4.6cm]{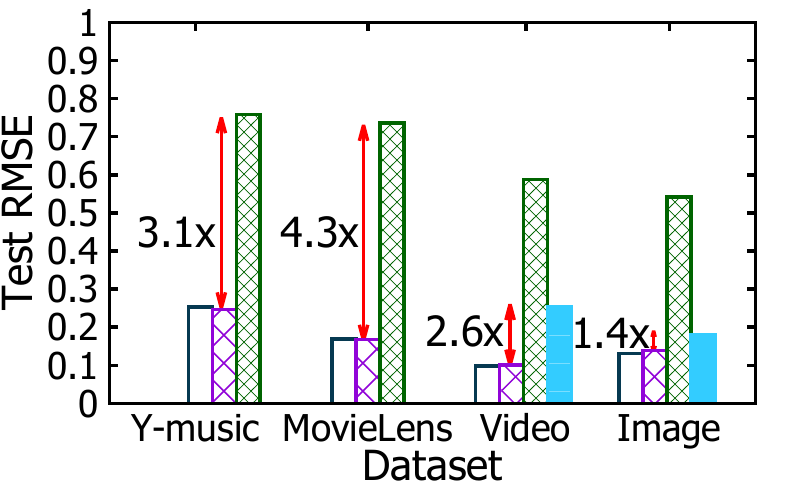}
		\label{fig:real_world_rmse}
	\end{subfigure}
	\vspace{-3mm}
	\caption{The accuracy of \method and competitors on the real-world tensors. \method achieves 1.4-4.8$\times$ higher accuracy compared to that of existing methods. An empty bar indicates that the corresponding method shows O.O.M. while factorizing the dataset.}
	\label{fig:real_world_accuracy}
\end{figure}




	\section{Discovery}
	\label{sec:discovery}
	We present discoveries on the latest MovieLens dataset introduced in Section~\ref{sec:exp:settings}. Existing methods cannot detect meaningful concepts or relations owing to their limited scalability or low accuracy.
For instance, \SHOT and \CSF produce factor matrices mostly filled with zeros, which trigger highly inaccurate clustering. In contrast, \method successfully reveals the hidden concepts and relations such as a `Thriller' concept, and a relation between a `Drama' concept and hours (see Tables~\ref{tab:concept} and~\ref{tab:relation}).

\textbf{Concept Discovery.}
Our intuition for concept discovery is that each row of factor matrices represents latent features of the row.
Thus, we can apply K-means clustering algorithm~\cite{kmeans} on factor matrices to discover hidden concepts.
In the case of movie-associated factor matrix, each row represents a latent feature of a movie.
Therefore, by analyzing the clustered rows,  \method excavates diverse movie genres, such as `Thriller', `Comedy', and `Drama', and all the movies belonging to those genres are closely related (see Table~\ref{tab:concept}).

\textbf{Relation Discovery.}
Core tensor $\T{G}$ plays an important role in discovering relations.
An entry $(j_1,...,j_N)$ of $\T{G}$ is associated with the $j_n$th column of $\mathbf{A^{(n)}}$, and it implies that those columns are related to each other with a strength $\T{G}_{(j_1,...,j_N)}$. Hence,
examining large values in $\T{G}$ gives us clues to find strong relations in a given tensor.
For instance, \method succeeds in revealing relations between year and hour attributes such as $(2015, 2 pm)$ by investigating the $top-3$ largest value of a core tensor. In a similar way, \method finds strong relations between movie, year, and hour attributes, as summarized in Table~\ref{tab:relation}.

	\section{Related Work}
	\label{sec:related_work}
	
We review related works on CP and Tucker
factorizations, and applications of Tucker decomposition.

\textbf{CP Decomposition (CPD).}
Many algorithms have been developed for scalable CPD.
GigaTensor~\cite{kang2012gigatensor} is the first distributed CP method running on the MapReduce framework.
Park et al.~\cite{DBTF_ParkICDE17} propose a distributed algorithm, DBTF, for fast and scalable Boolean CPD.
In \cite{parcube}, Papalexakis et al. present a sampling-based, parallelizable method named ParCube for sparse CPD.
AdaTM~\cite{li2017model} is an adaptive tensor memoization algorithm for CPD of sparse tensors, which automatically tunes algorithm parameters.
Kaya and U{\c{c}}ar~\cite{kaya2015scalable} propose distributed memory CPD methods based on hypergraph partitioning of sparse tensors.
Those algorithms are based on the ALS similarly to the conventional Tucker-ALS.


Since the above CP methods predict missing entries as
zeros, tensor completion algorithms using CPD have gained increasing attention in recent years.
Tomasi et al.~\cite{tomasi2005parafac} and Acar et al.~\cite{Acar2011} first address CPD models for tensor completion problems.
Karlsson et al.~\cite{KARLSSON2016222} discuss parallel formulations of ALS and CCD++ for tensor completion in the CP format.
Smith et al.~\cite{smith2016exploration} explore three optimization
algorithms for high performance, parallel tensor completion:
alternating least squares (ALS), stochastic gradient descent (SGD),
and coordinate descent (CCD++).
For distributed platforms, Shin et al.~\cite{ShinK17} propose CDTF and SALS, which are ALS-based CPD methods for partially observable tensors; Yang et al.~\cite{Yang2017} also offer SGD-based formulations for sparse tensors.
Note that \cite{ShinK17} and \cite{smith2016exploration} offer a  row-wise parallelization for CPD as \method does for Tucker decomposition.

\begin{table}[t!]
	\footnotesize
	\centering
	\caption{Concept discoveries on the MovieLens dataset ($J=8,K=100$). Three notable movie concepts are found by \method. }
	\centering
	\begin{tabular}{ c | c | c }
		\toprule
		\textbf{Concept} & \textbf{Index} & \textbf{Attributes} \\
		\midrule
		& 15535	& Inception, 2010, $Action|Crime|Sci-Fi$\\
		\underline{\textbf{C1: Thriller}} & 4880 & Vanilla Sky, 2001, $Mystery|Romance$  \\
		& 24694 & The Imitation Game, 2014, $Drama|Thriller$ \\ \hline
		& 6373 & Bruce Almighty, 2003, $Drama|Fantasy$\\
		\underline{\textbf{C2: Comedy}} & 16680 & Home Alone 4, 2002, $Children|Comedy$\\
		& 12811 & Mamma Mia!, 2008, $Musical|Romance$\\ \hline
		& 19822 & Life of Pi, 2012, $Adventure|Drama|IMAX$\\
		\underline{\textbf{C3: Drama}} & 11873 & Once, 2006, $Drama|Musical|Romance$ \\
		& 214 & Before Sunrise, 1995, $Drama|Romance$ \\
		\bottomrule
	\end{tabular}	
	\label{tab:concept}
	\vspace{2mm}
\end{table}

\begin{table}[t!]
	\footnotesize
	\centering
	\caption{Relation discoveries on the MovieLens dataset. Three notable relations between movie, year, and hour are found by \method. }
	\centering
	\begin{tabular}{ c | c | c }
		\toprule
		\textbf{Relations} & \textbf{$\T{G}$ Value} & \textbf{Details} \\
		\midrule
		\underline{\textbf{R1:}} & $1.65 \times 10^6$ & Most preferred hours for drama genre  \\
		\underline{\textbf{Drama-Hour}} & & 8 am, 4 pm, 1 am, 9 pm, and 6 pm \\ \hline
		\underline{\textbf{R2: }} & $1.29 \times 10^6$ & Comedy genre is favored in this period \\
		\underline{\textbf{Comedy-Year}} & &  (1997, 1998, 1999), (2005, 2006, 2007)\\ \hline
		\underline{\textbf{R3: }} & $2.29 \times 10^6$& Most preferred hour for watching movies\\
		\underline{\textbf{Year-Hour}} & & (2015, 2 pm), (2014, 0 am), (2013, 9 pm) \\
		\bottomrule
	\end{tabular}	
\vspace{2mm}
	\label{tab:relation}
\end{table}

\textbf{Tucker Factorization (TF).}
Several algorithms have been developed for TF.
\cite{tucker1966some} presents an early work on TF, which is known as HOSVD.
De Lathauwer et al.~\cite{DBLP:journals/siammax/LathauwerMV00a} propose Tucker-ALS, described in Algorithm~\ref{alg:ALS_FULL}.
As the size of real-world tensors increases rapidly,
there has been a growing need for scalable TF methods.
One major challenge is the ``intermediate data explosion'' problem~\cite{kang2012gigatensor}.
%
MET (Memory Efficient Tucker)~\cite{MET} tackles this challenge
by adaptively ordering computations and performing them in a piecemeal manner.
HaTen2~\cite{DBLP:journals/vldb/JeonPFSK16} reduces intermediate data
by reordering computations and exploiting the sparsity of real-world tensors in MapReduce.
However, both MET and HaTen2 suffer from a limitation called M-bottleneck~\cite{Oh:2017:SHOT}
that arises from explicit materialization of intermediate data.
S-HOT~\cite{Oh:2017:SHOT} avoids M-bottleneck by employing on-the-fly computation.
Kaya and U{\c{c}}ar~\cite{kaya} discuss a shared and distributed memory parallelization of an ALS-based TF for sparse tensors.
\cite{ChakaravarthyCJ17} proposes optimizations of HOOI for dense tensors on distributed systems.
The above methods depend on SVD for updating factor matrices, while \method utilizes a row-wise update rule.

There are also various accuracy-focused TF methods including \wopt~\cite{filipovic2015tucker}.
Yang et al.~\cite{DBLP:journals/tsp/YangFLZ16} propose another TF method
that automatically finds a concise Tucker representation of a tensor via an iterative reweighted algorithm.
Liu et al.~\cite{Liu2013} define the trace norm of a tensor, and present three convex optimization algorithms for low-rank tensor completion.
%
Liu et al.~\cite{DBLP:conf/nips/LiuSFCC14} propose a core tensor Schatten 1-norm minimization method with a rank-increasing scheme for tensor factorization and completion.
Note that these algorithms have limited scalability compared to \method since they are not fully optimized with respect to time and memory.


%


\textbf{Applications of Tucker Factorization.}
Tucker factorization (TF) has been used for various applications.
Sun et al.~\cite{Websearch} apply a 3-way TF to a tensor consisting of (users, queries, Web pages) to personalize Web search.
Bro et al.~\cite{bro1999fast} use TF for speeding up CPD by compressing a tensor.
Sun et al.~\cite{sun2009multivis} propose a framework for content-based network analysis and visualization
which employs a biased sampling-based TF method.
TF is also used for analyzing trends in the blogosphere~\cite{DBLP:conf/cikm/ChiTT06}.

	\section{Conclusion}
	\label{sec:conclusion}
	We propose \method, a scalable Tucker factorization method for sparse tensors.
By using ALS with a row-wise update rule, and with careful distributions of works for parallelization, \method successfully offers time and memory optimized algorithms with theoretical proof and analysis.
\method runs 1.7-14.1$\times$ faster than the state-of-the-art with 1.4-4.8$\times$ less error, and
exhibits near-linear scalability with respect to the number of observable entries and threads.
We discover hidden concepts and relations on the latest MovieLens dataset with \method, which cannot be identified by existing methods due to their limited scalability or low accuracy.
Future works include extending \method to distributed platforms such as Hadoop or Spark, and applying sampling techniques on observable entries to accelerate decompositions, while sacrificing little accuracy.

	\section*{Acknowledgment}
	{
		This work was supported by the National Research Foundation of Korea (NRF) funded by the Ministry of Science, ICT, and Future Planning (NRF-2016M3C4A7952587, PF Class Heterogeneous High-Performance Computer Development). U Kang is the corresponding author.
	}	
	
	\bibliographystyle{IEEEtran}
	\bibliography{bib/myref,bib/ukang}
	
\end{document}